\newcommand{\BC}{{\mathbb {C}}}
 \newcommand{\BR}{{\mathbb {R}}}
 \newcommand{\BZ}{{\mathbb {Z}}}
\newcommand{\GL}{{\mathrm {GL}}} \newcommand{\PGL}{{\mathrm {PGL}}}
\newcommand{\SL}{{\mathrm {SL}}} \newcommand{\PSL}{{\mathrm {PSL}}}
 \newcommand{\Tr}{{\mathrm{Tr}}}
\newcommand{\ra}{\rightarrow}
\def\-{^{-1}}
\def\lp {\left (}
\def\rp {\right )}
\def\BCx{\BC^{\times}}
\def\Voronoi{Vorono\" \i \hskip 3 pt}
\renewcommand{\Re}{{\mathrm{Re}\,}}
\def\bfJ {\boldsymbol J}
\def\nwedge {\hskip - 2 pt \wedge \hskip - 2 pt }
\def\shskip{\hskip 0.5 pt}
\def\tw{\textit{w}}
\def\dx{d^{\times} \hskip -1 pt}
\g@addto@macro\normalsize{\setlength\abovedisplayskip{3pt}}
\g@addto@macro\normalsize{\setlength\belowdisplayskip{3pt}}
\newcommand{\delete}[1]{}
\theoremstyle{plain}
\newtheorem{thm}{Theorem}[section] \newtheorem{cor}[thm]{Corollary}
\newtheorem{lem}[thm]{Lemma}  \newtheorem{prop}[thm]{Proposition}
 \newtheorem{defn}[thm]{Definition}
\newtheorem {rem}[thm]{Remark}
\numberwithin{equation}{section}
\newtheorem*{acknowledgement}{Acknowledgements}
\begin{document}

	\title{Bessel Identities in the Waldspurger Correspondence over the Complex Numbers}

	\author{Jingsong Chai}
	\address{School of Mathematics\\ Sun Yat-sen University \\Guangzhou,  510275\\China}
	\email{chaijings@mail.sysu.edu.cn}

	\author{Zhi Qi}
	\address{School of Mathematical Sciences\\ Zhejiang University\\Hangzhou 310027\\China}
	\email{zhi.qi@zju.edu.cn}

	\subjclass[2010]{22E50, 33C10}
	\keywords{}
	\thanks{The first author is supported by the National Natural Science
Foundation of China [Grant 11771131].}

	\begin{abstract}
		We prove certain identities between relative Bessel functions attached to irreducible unitary representations of $\mathrm{PGL}_2(\mathbb{C})$ and Bessel functions attached to irreducible unitary representations of $\mathrm{SL}_2 (\mathbb{C})$. These identities reflect the Waldspurger correspondence over $\mathbb{C}$. We also prove several regularity theorems for Bessel and relative Bessel distributions which appear in the relative trace formula. This paper constitutes the local   spectral theory of Jacquet's relative trace formula over $\mathbb{C}$.
	\end{abstract}
	
	\maketitle
	
	\section{Introduction}
	
	\subsection{Motivations}
	
	There is a pair of exponential integral formulae of Weber and Hardy on the Fourier transform of Bessel functions on the real numbers. Let $e (x) = e^{2 \pi i x}$.
	Weber's formula   is as follows,
	\begin{equation}\label{0eq: Weber's formula}
	\int_0^\infty \frac 1 {\sqrt x}  J_{\nu} \lp 4 \pi \sqrt x\rp e \lp {\pm x y} \rp   {d x}  = \frac 1 {\sqrt {2 y} } e \lp {\mp  \lp \frac 1 {2 y} - \frac 1 8  \nu - \frac 1 8 \rp} \rp J_{\frac 1 2 \nu} \lp \frac {\pi} y \rp,
	\end{equation}
	with $y \in (0, \infty)$, valid when  $\Re \nu > - 1$.  Hardy's formula  is in a similar fashion,
	\begin{equation}\label{0eq: Hardy's formula}
	\begin{split}
	\int_0^\infty \frac 1 {\sqrt x}  K_{\nu} \lp 4 \pi \sqrt x\rp e \lp {\pm x y} \rp   {d x} & =   - \frac {\pi} {2 \sin ( \pi \nu)}  \frac 1 {\sqrt {2 y} } e \lp {\pm  \lp \frac 1 {2 y} + \frac 1 8 \rp} \rp \\
	& \lp e \lp \pm \frac 1 8 \nu \rp J_{\frac 1 2 \nu} \lp \frac {\pi} y \rp -
	e \lp \mp \frac 1 8 \nu \rp J_{- \frac 1 2 \nu} \lp \frac {\pi} y \rp \rp,
	\end{split}
	\end{equation}
	when $|\Re \nu | < 1$. Here $J_{\nu}$ and $K_{\nu}$ are  Bessel functions (see \cite{Watson}).
	
	In the work  \cite{BaruchMao-Real} of Baruch and Mao, the formulae \eqref{0eq: Weber's formula} and \eqref{0eq: Hardy's formula} are used to establish an identity between the relative Bessel functions for $ \PGL_2 (\BR)  $ and the Bessel functions for $\widetilde {\SL}_2 (\BR)$ and hence a correspondence from  irreducible unitary representations of $\PGL_2 (\BR)$ to  irreducible genuine unitary representations of $\widetilde {\SL}_2 (\BR)$. This correspondence is exactly the Shimura-Waldspurger correspndence over $\BR$!
	Completely analogous results for the non-Archimedean case were  obtained in  \cite{BaruchMao-NA}. These results fit into the theory of the relative trace formula developed by
	Jacquet and constitute the local (real and non-Archimedean) spectral theory that complements the global theory in \cite{Jacquet-RTF}. Ultimately,  the Waldspurger formula over a totally real field was obtained and used to study the central
	value of $\PGL_2$ automorphic $L$-functions in \cite{BaruchMao-Global}.

	Recently, it is proven in \cite{Qi-Sph,Qi-II-G} the following complex analogue of the classical formulae of Weber and Hardy,
	\begin{equation}\label{eq: main 1}
	\begin{split}
	\int_{0}^{2 \pi} \int_0^\infty \bfJ_{ \mu, \shskip m } \hskip - 2 pt \lp x e^{i\phi} \rp  e (- 2 x y \cos (\phi + \theta) )    d x  d \phi
	= \frac {1} {4 y}   e\lp \frac {\cos \theta } y \rp  \bfJ_{  \frac 12 \mu, \shskip \frac 1 2 m } \lp  \frac 1 {
		16 y^2  e^{2 i \theta}  } \rp ,
	\end{split}
	\end{equation}
	for $y \in (0, \infty)$ and $\theta \in [0, 2 \pi)$, provided that $|\Re \mu| < \frac 1 2$ and $m$ is even. Here $ \bfJ_{ \mu, \shskip m }  (z)$ is the   Bessel function over the complex numbers defined as
	\begin{equation*}
		\bfJ_{ \mu,\shskip   m} \lp z \rp =
		\left\{
		\begin{split}
			& \frac {2 \pi^2} {\sin (2\pi \mu)} \lp J_{\mu,\shskip   m} (4 \pi \sqrt z) -  J_{-\mu,\shskip   -m} (4 \pi \sqrt z) \rp, \hskip 6.5 pt \text {if } m \text{ is even},\\
			& \frac {2 \pi^2 i} {\cos (2\pi \mu)} \lp J_{\mu,\shskip   m} (4 \pi \sqrt z) + J_{-\mu,\shskip   -m} (4 \pi \sqrt z) \rp, \hskip 5 pt \text {if }  m \text{ is odd},
		\end{split}
		\right.
	\end{equation*}
with
\begin{equation*}
	J_{\mu,\shskip   m} (z) = J_{- 2\mu - \frac 12 m } \lp  z \rp J_{- 2\mu + \frac 12 m  } \lp  {\overline z} \rp.
\end{equation*}
	See \S \ref{sec: defn of J(z)} for more discussions on the definition of  $ \bfJ_{ \mu, \shskip m }  (z)$.
	
	In this paper, we shall use the formula \eqref{eq: main 1} to establish the Bessel identity for the  Shimura-Waldspurger correspndence over $\BC$. This  completes the local spectral theory of  the relative trace formula of Jacquet,  complementary to \cite{BaruchMao-NA} and \cite{BaruchMao-Real}. As application of this paper,  we wish to further generalize the Waldspurger formula  onto an arbitrary number field\footnote{This was done very recently while the present paper was under peer review. See \cite{Chai-Qi-Wald}.}.
	
	\subsection{Main theorem}
	We now give a sample of the Bessel identities that we obtain.
	Let
	\begin{align*}
	N = \left\{ \begin{pmatrix}
	1 & z \\ & 1
	\end{pmatrix} : z \in \BC \right\}, \hskip 10 pt A =  \left\{ \begin{pmatrix}
	a &   \\ & b
	\end{pmatrix} : a, b \in \BC \smallsetminus \{0\} \right\}.
	\end{align*}
	Let $\psi_1 (z) = e (\Tr \, z )$, viewed  as a character on $N$. Let $\pi$ be an  infinite-dimensional  irreducible unitary representation of $\GL_2 (\BC)$  with trivial central character (that is, a representation of $  \PGL_2 (\BC)$). We attach to  $\pi$   the relative Bessel function  $ i_{\hskip 0.5 pt \pi, \hskip 0.5 pt \psi_1} $  on $\GL_2 (\BC)$ which  is left $A$-invariant  and right $(\psi_1, N)$-equivariant.
	 $ i_{\hskip 0.5 pt \pi, \hskip 0.5 pt \psi_1} $ is real analytic on an open subset of the Bruhat cell in $\GL_2 (\BC)$. Let $\sigma$ be
	an irreducible unitary representation of $\SL_2 (\BC)$.
	We attach to $\sigma$ the Bessel function $j_{\sigma, \hskip 0.5 pt \psi_1}$    on $\SL_2 (\BC)$   which    is both left and right $(\psi_1, N)$-equivariant.  $j_{\pi, \hskip 0.5 pt \psi_1}$ is real analytic on the open Bruhat cell in $\SL_2 (\BC)$. We stress that $\pi$ or $\sigma$ is determined by $ i_{\hskip 0.5 pt \pi, \hskip 0.5 pt \psi_1} $ or $ j_{\sigma, \hskip 0.5 pt \psi_1}$ respectively. Our main theorem of this
	paper (see Theorem \ref{thm: main}) is as follows.

	\begin{thm}\label{thm: main, intro} Let   $\pi $ be as above. There exists $\sigma$ as above such that for any $z \in \BC \smallsetminus \{0\}$ we have
		\begin{align}\label{0eq: Bessel identity}
		i_{\hskip 0.5 pt \pi, \hskip 0.5 pt \psi_1} \begin{pmatrix}
		z / 4 & 1 \\ 1 &
		\end{pmatrix}
		= \frac {2 \epsilon (\pi, 1/2) \, \psi_1     \lp   2 / z \rp |z|} {L(\pi, 1/2)}  j_{\sigma, \hskip 0.5 pt \psi_1} \begin{pmatrix}
		& - z\- \\ z \\
		\end{pmatrix},
		\end{align}
		in which $L (\pi, 1/2)$ and  $\epsilon (\pi, 1/2)$ are the central values of the $L$-factor and the $\epsilon$-factor associated with $ \pi $.
	\end{thm}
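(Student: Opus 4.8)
The plan is to reduce the Bessel identity \eqref{0eq: Bessel identity} to the complex Weber--Hardy formula \eqref{eq: main 1}. The first step is to compute the two sides explicitly in terms of the complex Bessel functions $\bfJ_{\mu,\shskip m}(z)$. On the $\PGL_2(\BC)$ side, I would show that the relative Bessel function $i_{\pi,\psi_1}$, restricted to the anti-diagonal Bruhat cell, is an integral transform (essentially a one-dimensional Mellin--Bessel transform over $\BC^\times$) of the Kirillov-model Bessel function of $\pi$; this is where $\epsilon(\pi,1/2)$ and $L(\pi,1/2)$ enter, via the local functional equation / Rankin--Selberg unfolding. On the $\SL_2(\BC)$ side, I would identify $j_{\sigma,\psi_1}$ on the open cell with $\bfJ_{\frac12\mu,\shskip\frac12 m}$ for the parameters $(\tfrac12\mu,\tfrac12 m)$ corresponding to $\sigma$ under Waldspurger, using the classification of irreducible unitary representations of $\SL_2(\BC)$ and the known formula for their Bessel functions.

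Next I would match parameters. If $\pi$ has Langlands parameter $(\mu, m)$ (so that $\pi$ is a subquotient of a principal series with those data, and $m$ is forced to be even by triviality of the central character), then the representation $\sigma$ produced by the theta/Waldspurger correspondence over $\BC$ should have parameter $(\tfrac12\mu,\tfrac12 m)$ — exactly the indices appearing on the right of \eqref{eq: main 1}. Then the substitution in \eqref{eq: main 1}, with the change of variables dictated by the Bruhat-cell coordinates $\begin{pmatrix} z/4 & 1 \\ 1 & \end{pmatrix}$ and $\begin{pmatrix} & -z^{-1} \\ z & \end{pmatrix}$, should transform the left-hand integral transform into precisely $i_{\pi,\psi_1}$ and the right-hand closed form into precisely $j_{\sigma,\psi_1}$ up to the elementary factor $\dfrac{2\,\psi_1(2/z)\,|z|}{1}$. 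The constant $\epsilon(\pi,1/2)/L(\pi,1/2)$ is then pinned down by comparing the normalizations: $L(\pi,1/2)$ comes from the Tate-type integral defining $i_{\pi,\psi_1}$ near the relevant cell, and $\epsilon(\pi,1/2)$ from the argument-shift phase $e(\cos\theta/y)$ in \eqref{eq: main 1} after translating $\theta$ and $y$ into $\arg z$ and $|z|$.

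The main obstacle, I expect, is the bookkeeping in the first step: carefully establishing that the relative Bessel function $i_{\pi,\psi_1}$ on $\GL_2(\BC)$ — a priori defined as a distributional matrix coefficient — coincides on the open part of the Bruhat cell with the explicit Hankel-type integral of the Kirillov Bessel function, with the correct $\epsilon$- and $L$-factors and no spurious constants. This requires the local theory of Bessel functions for $\GL_2(\BC)$ (analytic continuation, the functional equation relating the Bessel function to its "dual", and the precise form of the $\GL_2\times\GL_1$ local integral) to be in place; one must also handle convergence by analytic continuation in $\mu$ from the range $|\Re\mu|<\tfrac12$ where \eqref{eq: main 1} is valid, and then remove that restriction by the meromorphic continuation of both sides. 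Once both sides are known in closed form and the parameters are matched, the identity \eqref{0eq: Bessel identity} is a direct consequence of \eqref{eq: main 1}, with the residual elementary factors collected into the stated constant. Finally, I would note that $\sigma$ is unique because $j_{\sigma,\psi_1}$ determines $\sigma$, so the correspondence $\pi\mapsto\sigma$ is well defined and, by comparing parameters, is the Shimura--Waldspurger correspondence over $\BC$.
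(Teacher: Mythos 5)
You take essentially the same route as the paper: write $i_{\pi,\shskip\psi_1}$ on the open cell as an integral transform (against the additive character $\psi_1$, i.e.\ a Fourier/Hankel-type transform, not a Mellin one) of the Kirillov-model Bessel function $j_{\pi,\shskip\psi_1}$, evaluate that transform by \eqref{eq: main 1}, identify $j_{\sigma,\shskip\psi_1}$ with $\bfJ_{\frac12\mu,\shskip\frac12 m}$, and compare. However, two of your claims about where the constants come from are wrong, and they are exactly the bookkeeping you postpone. The factor $L(\pi,1/2)$ does not arise from a local functional equation or Rankin--Selberg unfolding: it is the normalization built into the torus period $P(\varv)=\frac{1}{L(\pi,1/2)}\int_{\BCx}W_\varv(t(a))\dx a$ in \eqref{def: P}, which propagates through $I(f)=\frac{1}{L(\pi,1/2)}\int_{\BCx}J(\rho_l(t(b))f)\dx b$ in \eqref{eq: I(f) = int J} into the formula \eqref{6eq: defn of i} for $i_{\pi,\shskip\psi_1}$; no functional equation is used anywhere. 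Likewise $\epsilon(\pi,1/2)$ is not produced by the phase $e(\cos\theta/y)$ in \eqref{eq: main 1}: that phase is precisely the transfer factor $\psi_1(2/z)$ already visible on the right of \eqref{0eq: Bessel identity}. The $\epsilon$-factor enters only through the comparison with the $\SL_2(\BC)$ side: $\epsilon(\pi,1/2)=i^{|m|}=(-1)^{m/2}$ (computed from Knapp/Tate), and this exactly cancels the sign in $j_{\sigma,\shskip\psi_1}(\varw s(z))=(-1)^{m/2}|z|^{-2}\bfJ_{\frac12\mu,\shskip\frac12 m}(z^{-2})$ coming from \eqref{4eq: j = J, SL} for the parameter $\tfrac12 m$ of $\sigma$.

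Two further points. No analytic continuation in $\mu$ is needed or available: unitarity with trivial central character forces $\pi=\pi_{\mu,\shskip m}$ with $\mu\in i\shskip\BR$ or $\mu\in\big(0,\tfrac12\big)$ and $m$ even (there are no discrete series over $\BC$), so $|\Re\mu|<\tfrac12$ always holds and \eqref{eq: main 1} applies as is. More seriously, the ``first step'' you defer is where essentially all of the paper's work lies, and the tools you cite (the dual Bessel function, the precise $\GL_2\times\GL_1$ local integral) are not the ones that carry it out. What is actually needed is: (i) the kernel formula for the Weyl element acting in the Kirillov model, which gives that the Bessel distribution $J_{\pi,\shskip\psi_1}$ is represented by the locally integrable function $j_{\pi,\shskip\psi_1}$ (local integrability resting on the orbital-integral estimates of the appendix); (ii) the reduction \eqref{eq: I(f) = int J} of $I$ to an $A$-average of $J$; and (iii) the distributional form \eqref{6eq: Fourier} of \eqref{eq: main 1}, valid for arbitrary Schwartz test functions with no vanishing at the origin, which is what allows the $\psi_1$-Fourier transform of $j_{\pi,\shskip\psi_1}$ to be computed directly against $C_c^\infty$ data (Lemma \ref{6lem: rel Bessel for split f}) and makes the differential-operator analysis of Baruch--Mao unnecessary. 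With those pieces in place, the comparison you describe does yield \eqref{0eq: Bessel identity}, and the uniqueness remark at the end of your plan is fine since $j_{\sigma,\shskip\psi_1}$ determines $\sigma$.
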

	
	The correspondence in Theorem \ref{thm: main, intro} is given by
	\begin{align*}
	\pi_{\mu, \shskip m}  \longrightarrow \sigma_{\frac 1 2 \mu, \shskip \frac 1 2 m}, \hskip 10 pt \text{for } \mu \in i \shskip \BR, m \text{ even, or } \mu \in \big( 0, \tfrac 1 2\big) \hskip -1 pt , m = 0,
	\end{align*}
	reflecting the index correspondence $(\mu, m) \longrightarrow \big(\tfrac 1 2 \mu, \tfrac 1 2 m \big)$ between the   Bessel functions on the two sides of \eqref{eq: main 1}. Here $\pi_{\mu, \shskip m}  $ and $\sigma_{\mu, \shskip m} $ are the unitary principal series or complementary series of $\GL_2 (\BC)$ and $\SL_2 (\BC)$ parameterized by $(\mu, m)$ respectively (see \S \ref{sec: representations of G and S} for the definitions). To be precise, we have
	\begin{align*}
	j_{\pi, \shskip \psi_1} \begin{pmatrix}
	&  z \\ 1& \end{pmatrix} =  
	& |  z| \bfJ_{ \mu,\shskip   m} \lp - z \rp, \hskip 10 pt \pi = \pi_{\mu, \shskip m}, 
	\end{align*}
	\begin{align*}
	j_{\sigma, \shskip \psi_1} \begin{pmatrix}
	& - z\- \\ z \\
	\end{pmatrix}  =  (-1)^{\frac 1 2 m} \left|   z \right|^{-2}  \bfJ_{\frac 1 2  \mu,\shskip \frac 1 2   m} \lp  z^{-2} \rp, \hskip 10 pt \sigma = \sigma_{\frac 1 2 \mu, \shskip \frac 1 2 m},
	\end{align*} \footnote{It is preferred here to view $	j_{\pi, \shskip \psi_1}$ as function on $\PGL_2 (\BC)$($= \PSL_2 (\BC)$).}
	and we shall prove in the sense of distributions that 
	\begin{align*}
	i_{\hskip 0.5 pt \pi, \hskip 0.5 pt \psi_1} \begin{pmatrix}
	u & 1 \\ 1 &
	\end{pmatrix} = \frac 1 {L (\pi, 1/2)} \int_{ \BC \smallsetminus \{0\}} j_{\pi, \shskip \psi_1} \begin{pmatrix}
	& z \\ 1& \end{pmatrix} \psi_1 ( u z) \frac {d_1 z} {|z|^2 },
	\end{align*} 
	where $d_1 z$ denotes twice of the Lebesgue measure on $\BC$. Moreover, note that $ \epsilon (\pi, 1/2) = i^{|m|} = (-1)^{\frac 1 2 m} $ if $ \pi = \pi_{\mu, \shskip m} $. Thus the identity \eqref{0eq: Bessel identity} follows from the integral formula \eqref{eq: main 1}.
	
	Actually, we shall prove a more general identity between $i_{\hskip 0.5 pt \pi, \hskip 0.5 pt \psi}$ and $j_{\sigma, \hskip 0.5 pt \psi '}$ for any two nontrivial characters $\psi$ and $\psi'$.
The correspondence $\pi \ra \sigma  $ turns out  to be exactly the
Waldspurger correspondence $\pi \ra \Theta (\pi)$. However, unlike the real case as in \cite{BaruchMao-Real}, the correspondence is now independent on $\psi'$.

	\subsection{Remarks}
	
	Admittedly, the Waldspurger correspondence over $\BC$ is tremendously simpler than that over $\BR$, because all the double covers of $\SL_2 (\BC)$ are isomorphic to the trivial product $\SL_2 (\BC) \times \{\pm 1 \}$. Moreover, the representation theory of $\GL_2 (\BC)$ or $\SL_2 (\BC)$ is simpler as discrete series do not exist.
	
	Our expositions would be simplified if we view $\PGL_2 (\BC)$ as $\PSL_2 (\BC)$ and work only on $\SL_2 (\BC)$. Nevertheless, we choose to work in the framework of the representation theory of $\GL_2 (\BC)$ in order to preserve the analogy  between this work and \cite{BaruchMao-Real}.
	
	As the formula \eqref{eq: main 1} is the foundation of this paper, we now make some remarks on its analytic perspective and applications.
	
	The proof of \eqref{eq: main 1} in \cite{Qi-II-G} is considerably harder than that of \eqref{0eq: Weber's formula} or \eqref{0eq: Hardy's formula}. Interestingly, besides an incorporation of stationary phase and differential equations, also arise in the course of proof certain complicated combinatorial formulae.
	
	It is the distributional variant of \eqref{eq: main 1} (see \eqref{6eq: Fourier}) that we shall use to deduce the formula of the relative Bessel function $i_{\pi, \shskip \psi}$. Critical is that the test function in \eqref{6eq: Fourier} only needs to be rapidly decaying at infinity but not zero.  By using this variant, we may
completely avoid the analysis of  differential operators as in \cite{BaruchMao-Real}. This idea would also work in the real context as in \cite{BaruchMao-Real}.
	
	\begin{acknowledgement}
		We thank Jim Cogdell for his help and the referee for several constructive suggestions.
	\end{acknowledgement}



	
	\section{Notations and preliminaries}\label{sec: notations}
	
	\subsection{Basic notations}
	
	Let $G=\GL_2(\BC)$ and $S=\SL_2(\BC)$. Let $B $, $
	A  $ and $Z$  denote the Borel subgroup of upper triangular matrices,
	the diagonal subgroup and the center of $G$ respectively. Set
	\[
	N=\left\{n(x)=\begin{pmatrix}1&x \\ &1 \end{pmatrix}:x\in \BC
	\right\}, 
	\]
	and
	\begin{align*}
	  \varw    =  \begin{pmatrix} &-1
	\\ 1&
	\end{pmatrix} &, \hskip 10 pt \varw_0=\begin{pmatrix}
	&1 \\ 1& \end{pmatrix}, \\
	 s(a)   =\begin{pmatrix} a & \\ &a^{-1} \end{pmatrix} , \hskip 10 pt
	t(a)= &\begin{pmatrix} a& \\ &1 \end{pmatrix}, \hskip 10 pt z(c)=\begin{pmatrix} c& \\ &c
	\end{pmatrix}.
	\end{align*}
	
	Recall that $e (x) = e^{2 \pi i x}$. For $\lambdaup \in \BC$, let  $\psi = \psi_{\lambdaup}$ be   the additive
	character of $\BC$ defined by $$ \psi_{\lambdaup} (z)=e (\Tr \shskip (\lambdaup z)) = e (\lambdaup z + \overline {\lambdaup z}).$$ 
	 We also view $\psi$ as a character of $N$ by $\psi(n(z))=\psi(z)$.  Let $\|z\| = |z|_{\BC}= |z|^2$. Take  $dz = d_{\lambdaup} z$  to be $2 \sqrt { \|\lambdaup\|}$ times of the Lebesgue measure on $\BC$, which is self-dual with
	respect to $\psi$. Let $\BCx = \BC \smallsetminus \{0\}$. 
	Set $\dx  z=dz/\|z\| $.

   For $f\in L^1(\BC )$, we define the $\psi$-Fourier transform of $f$ as
   \begin{equation}\label{def: Fourier}
   \widehat f(u)= \int_\BC f(z)\psi(uz)dz.
   \end{equation}
   With our choice of  measure $d z$, the Fourier transform is self-dual, namely,  $\widehat  {\widehat f } (u) = f (-u)$.


\subsection{Representations of $\GL_2 (\BC)$ and $\SL_2 (\BC)$}\label{sec: representations of G and S}

 Denote by $\chiup_{\nu,\shskip l} $   the character of $\BCx$ given by
   \begin{align*}
    \chiup_{\nu,\shskip  l} : z \ra \|z\|^{  \nu} (z/|z|)^{l},
   \end{align*}
   with $\nu \in \BC$ and $ l \in \BZ $.  According to Langlands' classification for $G$, any irreducible admissible representation of $G  $ may be parametrized by $(\nu_1, \nu_2, l_1, l_2) \in \BC^2 \times \BZ^2$. First, we introduce the principal series representation $\pi (\chiup_{\nu_1, \shskip l_1}, \chiup_{\nu_2,\shskip l_2})$. It is known that $\mathrm{Ind}_B^G (\chiup_{\nu_1, \shskip l_1}, \chiup_{\nu_2,\shskip l_2}) $, defined by the unitary induction, has a unique irreducible quotient provided that  $\Re \nu_1 \geqslant \Re \nu_2$. We denote this quotient by $\pi (\chiup_{\nu_1, \shskip l_1}, \chiup_{\nu_2,\shskip l_2})$ according to \cite[\S 6]{J-L}. Second, these principal series $\pi (\chiup_{\nu_1, \shskip l_1}, \chiup_{\nu_2,\shskip l_2})$ exhaust all the irreducible admissible representations of $G$ up to infinitesimal equivalence that occurs when we permute $ \chiup_{\nu_1, \shskip l_1}$ and $ \chiup_{\nu_2,\shskip  l_2}$. Note that we can always permute $( \nu_1, l_1 )$ and $(\nu_2, l_2)$ if necessary so that $\Re \nu_1 \geqslant \Re \nu_2$ is satisfied.  See \cite[\S 4]{Knapp} and  \cite[\S 6]{J-L} for more details.

For $\mu\in \BC, m\in \BZ$, we let $ \pi_{\mu,\shskip m}$ denote the principal series   of $G$ with parameter $\big(\mu, - \mu, \frac 1 2 m, - \frac 1 2 m \big)$ if $m$ is even or $ \big(\mu, - \mu, \frac 1 2 (m+1), - \frac 1 2 (m-1) \big) $ if $m$ is odd. Note that $\pi_{\mu, \shskip  m} \otimes \chiup_{\nu,\shskip   l} (\det)$ exhaust all the principal series of $G$   as above. It is clear that $\pi_{\mu, \shskip  m}$ has trivial central character if and only if $m$ is even. 

When restricting $\pi_{\mu, \shskip  m}$ on $S$, we obtain the principal series $\sigma_{\mu,\shskip m}$ of $S$ induced from the character $s(a) \ra \chiup_{ 2 \mu,\shskip  m} (a) $. The principal series $\sigma_{\mu,\shskip m}$ exhaust all  the irreducible admissible representations of $S$.

Finally, the representation $\pi_{\mu, \shskip  m}$ or $\sigma_{\mu, \shskip  m}$ is unitary if
\begin{itemize}
	\item[-] (unitary principal series) $\Re \mu = 0$, or
	\item[-] (complementary series) $\mu   \in 
	\lp 0, \frac 1 2\rp$ and $ m = 0$.
\end{itemize}
For more information, the reader may consult the book of Knapp \cite{Knapp-Book}.

\subsection{Whittaker functions}

	Let $\pi$ be an infinite dimensional irreducible unitary
representation of $G$  on a Hilbert space $H$, with $H_\infty$ its subspace of smooth vectors. It is well known that $\pi$ is generic in the sense that
there exists a  nonzero continuous $\psi$-Whittaker
functional $L$ on $H_\infty$, unique up to scalars, satisfying
\begin{equation*}
L (\pi(n )\varv)=\psi(n )L(\varv), \hskip 10 pt n \in N, \   \varv\in H_\infty.
\end{equation*}
Let
\begin{equation}\label{def: Wv(g)}
W_\varv(g)=L(\pi(g)\varv), \hskip 10 pt  \varv\in H_\infty, \  g\in G,
\end{equation}
be the Whittaker function corresponding to $\varv$. 
All these definitions are valid for  $S$.

First, we have the following lemma for the asymptotic of Whittaker functions on the torus. 

\begin{lem}\label{lem: asymptotic of W(t(a))}
	For  $\varv\in H_\infty$, the function  $W_{\varv} (t(a))$ is rapidly decreasing at infinity and is of order $\|a\|^{\rho}$  for certain $\rho > 0$ when $a$ is in the vicinity of zero. 
\end{lem}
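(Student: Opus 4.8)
The plan is to prove the two assertions --- rapid decay as $\|a\| \to \infty$ and the power bound as $a \to 0$ --- separately, after first reducing to $\pi = \pi_{\mu, \shskip m}$. Indeed, by the classification recalled above, every infinite-dimensional irreducible unitary representation of $G$ has the form $\pi_{\mu, \shskip m} \otimes \chiup_{\nu, \shskip l}(\det)$ with $\Re \nu = 0$ and $\pi_{\mu, \shskip m}$ unitary, so either $\Re \mu = 0$ (unitary principal series) or $0 < \mu < \tfrac12$, $m = 0$ (complementary series); in both cases $|\Re \mu| < \tfrac12$. A twist by $\chiup_{\nu, \shskip l}(\det)$ multiplies $W_\varv(t(a))$ by $\chiup_{\nu, \shskip l}(a)$, of modulus $\|a\|^{\Re \nu} = 1$, so it changes neither claim; hence we may assume $\pi = \pi_{\mu, \shskip m}$ with $|\Re \mu| < \tfrac12$.

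For the rapid decay I would differentiate the defining equivariance of $L$. Let $X$ and $\overline X$ be the eigenvectors in the complexified Lie algebra $\mathfrak{n}_{\BC}$ of $N$ for the action of $\mathrm{Ad}(t(a))$, with eigenvalues $a$ and $\bar a$ respectively (they commute, as $N$ is abelian). Differentiating $L(\pi(\exp(\cdot))\varv) = \psi(\exp(\cdot)) L(\varv)$ shows that $L \circ d\pi(X)$ and $L \circ d\pi(\overline X)$ are nonzero scalar multiples of $L$, the scalars being nonzero because $\psi$ is nontrivial. Combined with $\pi(t(a)) d\pi(X) \pi(t(a))^{-1} = d\pi(\mathrm{Ad}(t(a))X) = a\, d\pi(X)$ and its analogue for $\overline X$, this gives
\begin{equation*}
W_{d\pi(X)\varv}(t(a)) = c\, a\, W_\varv(t(a)), \qquad W_{d\pi(\overline X)\varv}(t(a)) = c'\, \bar a\, W_\varv(t(a)),
\end{equation*}
for nonzero constants $c, c'$. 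Iterating $k$ times in each variable yields $\|a\|^{k} W_\varv(t(a)) = (c c')^{-k} W_{\varv_k}(t(a))$ with $\varv_k = d\pi(X^k \overline X^k)\varv \in H_\infty$. Now I invoke the standard fact that the Whittaker function of a smooth vector in an admissible representation has moderate growth, $|W_{\varv'}(g)| \leqslant C(\varv')(1 + \|g\|)^{N}$ with $N$ depending only on $\pi$; since $\|t(a)\| \asymp \|a\|^{1/2}$ for $\|a\| \geqslant 1$, we get $W_\varv(t(a)) = O_k(\|a\|^{N/2 - k})$ for every $k$, i.e. rapid decay. For $\pi = \pi_{\mu, \shskip m}$ one could alternatively quote the explicit formula for $W_\varv(t(a))$ as a $K$-finite sum of products of $K$-Bessel functions, which decay exponentially; the argument above avoids any explicit formula.

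For the behaviour near $a = 0$ I would pass to the Kirillov model, i.e. look at the space $\{\, a \mapsto W_\varv(t(a)) : \varv \in H_\infty \,\}$ of smooth functions on $\BCx$. It contains $C_c^\infty(\BCx)$, and on a punctured neighbourhood of $0$ every such function is a finite linear combination of $\|a\|^{\mu + 1/2}(a/|a|)^{l_1}$ and $\|a\|^{-\mu + 1/2}(a/|a|)^{l_2}$ --- the two inducing characters of $\pi_{\mu, \shskip m}$ multiplied by $\delta_B^{1/2}$ restricted to $t(a)$ --- together with an extra term $\|a\|^{1/2} \log\|a\|$ precisely when $\mu = 0$ and $m = 0$. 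Each of the first two terms is $O(\|a\|^{\rho})$ near $0$ for every $\rho < \tfrac12 - |\Re \mu|$, and the logarithmic term, occurring only when $\mu = 0$, is $O(\|a\|^{\rho})$ for every $\rho < \tfrac12$; choosing such a $\rho > 0$ gives $W_\varv(t(a)) = O(\|a\|^{\rho})$ as $a \to 0$, which is the claim. The hard part is this last step --- establishing the asymptotic expansion of $W_\varv(t(a))$ near $0$, hence the list of exponents and the appearance of the logarithm. This can be extracted from the realisation of $\pi_{\mu, \shskip m}$ inside a principal series together with a direct analysis of the Jacquet integral on the torus (substituting $x \mapsto a x$ in $W_f(t(a)) = \int_{\BC} f(\varw n(x) t(a)) \psi(- x)\, dx$ and expanding in powers of $a$), or from Casselman's theory of asymptotics; unitarity, equivalently $|\Re \mu| < \tfrac12$, is exactly what forces every surviving exponent to be positive. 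The same reasoning applies to $S$ with $t(a)$ replaced by $s(a)$ and $\|a\|$ by $\|a\|^{2}$.
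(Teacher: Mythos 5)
Your reduction to $\pi_{\mu,\shskip m}$ and your treatment of the decay at infinity are fine, and the latter is genuinely different from what the paper does: the paper simply cites Godement / Jacquet--Langlands (and, for general smooth vectors, Jacquet--Shalika), whereas your argument --- differentiating the equivariance of $L$ to get $L\circ d\pi(X)=d\psi(X)L$ with $d\psi(X),d\psi(\overline X)\neq 0$, using $\mathrm{Ad}(t(a))X=aX$, $\mathrm{Ad}(t(a))\overline X=\bar a\overline X$ to get $\|a\|^{k}W_{\varv}(t(a))=(cc')^{-k}W_{\varv_k}(t(a))$, and then the moderate-growth bound coming from the continuity of $L$ on $H_\infty$ together with unitarity of $\pi$ --- is self-contained and applies directly to all smooth vectors. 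That is a real advantage, since the paper itself remarks that the references it quotes for the decay treat only $\mathrm{U}_2(\BC)$-finite vectors.

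The gap is in the behaviour near $a=0$. Your key structural claim --- that on a punctured neighbourhood of $0$ every Kirillov-model function is an \emph{exact finite linear combination} of $\|a\|^{1/2\pm\mu}(a/|a|)^{l_i}$ (plus one logarithmic term) --- is the non-archimedean statement and is false over $\BC$: already for the spherical vector of $\pi_{\mu,\shskip 0}$, $W_{\varv}(t(a))$ is, up to elementary factors, the $K$-Bessel function $K_{2\mu}$ of $|a|$, whose expansion at $0$ contains all the exponents $\tfrac12\pm\mu+n$, $n\geqslant 0$. What is true, and what the paper actually uses, is the Jacquet--Shalika result (\S 4.4, Proposition 3 of their paper) that $W_{\varv}(t(a)k)=\sum_\xi \phi_\xi(a,k)\xi(a)$ with $\phi_\xi$ Schwartz and the finite functions $\xi$ of positive real part; equivalently an asymptotic expansion whose leading exponents are $\tfrac12\pm\Re\mu>0$. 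That corrected statement still yields $W_{\varv}(t(a))=O(\|a\|^{\rho})$ for any $0<\rho<\tfrac12-|\Re\mu|$, so your use of unitarity and your list of exponents are right, but the statement you lean on must be replaced and then actually established: your proposed derivation (substituting $x\mapsto ax$ in the Jacquet integral and ``expanding in powers of $a$'') is exactly the delicate point, since for unitary parameters the Jacquet integral requires regularization and, more importantly, one must handle all smooth vectors and not just $K$-finite ones --- precisely the difficulty the paper flags and the reason it invokes Jacquet--Shalika (or a careful re-estimation of Godement's integrals) rather than a formal expansion. As written, then, the near-zero half of your proof is incomplete, though it is repaired by quoting the same asymptotic theory (Jacquet--Shalika, or Casselman--Wallach asymptotics, which you do mention) in its correct form.
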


This lemma is a consequence of a much more general result of Jacquet and Shalika for $\GL_n $ over a local field in \cite[\S 4.4 Proposition 3]{Jacquet-Shalika:Exterior}. Specialized to the case of $\GL_2 (\BC)$, their result may be phrased as follows. There is a finite set $C$ of characters of $\BCx$ with {\it positive} real part, namely, characters $\chiup $ with $|\chiup (z)| = \|z\|^{\rho}$ for $\rho > 0$, and for each $\chiup \in C$ a nonnegative integer $r_{\chiup}$ with the following property\footnote{For $\GL_2$, one should have $r_{\chiup} = 0$ or $1$.}: let $X$ be the set of finite functions of the form $\chiup (a) ( \log \|a\|)^{r}$, with $\chiup \in C$ and $r \leqslant r_{\chiup}$, then for   any given $\varv \in H_{\infty}$ there are functions $\phi_{\xi}$ in the Schwartz space on $\BC \times \mathrm{U}_2(\BC)$ such that 
\begin{align*}
W_{\varv} (t(a) k) = \sum_{ \xi\shskip \in X } \phi_{\xi} (a, k) \xi (a).
\end{align*} 
The rapid decay of $W_{\varv} (t(a))$ as $\|a\| \ra \infty$ is already proven in   \cite[\S 2.5]{Godement} (and also \cite[\S 6]{J-L}), in particular (68)-(72). However, we do not find any concrete statement in either \cite{J-L} or \cite{Godement} on the asymptotic of $W_{\varv} (t(a) )$ for $a$ small. Nevertheless, we may prove the asymptotic $W_{\varv} (t(a)) = O (\|a\|^{\rho}) $ as $\|a\| \ra 0$ if we examine and estimate the integrals in (69) and (70) in   \cite[\S 2.5]{Godement} more carefully; it is important here that $\pi$ is unitary so that either $\Re \mu = 0$ or $\mu \in \lp 0, \frac 1 2 \rp$ (we may choose $0 < \rho < \frac 1 2 - \Re \mu$). It should be noted that only $\mathrm{U}_2 (\BC)$-finite vectors are treated in \cite[\S 6]{J-L} and \cite[\S 2.5]{Godement}.

Moreover, we have the following analogue of    \cite[Lemma 2.1]{BaruchMao-Real}.
\begin{lem}\label{le21}
Let $\varv\in H_\infty$, $f,g\in C^\infty(\BCx)$. Assume that both $f(a)W_\varv(t(a))$ and $g(a)W_\varv(t(a))$ are in $L^1(\BCx, \dx  a)$. If
\[
\int_{\BCx} f(a)W_{\pi(n)\varv}(t(a))\dx  a=\int_{\BCx} g(a)W_{\pi(n)\varv}(t(a))\dx  a
\]
for all $n\in N$, then $f(a)W_\varv(t(a))=g(a)W_\varv(t(a))$ for all $a\in \BCx$.
\end{lem}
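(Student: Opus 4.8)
The plan is to mimic the proof of \cite[Lemma 2.1]{BaruchMao-Real}, reducing the statement to the injectivity of the Fourier transform on $\BCx$ after an appropriate change of variable. Set $h(a) = (f(a) - g(a)) W_\varv(t(a))$; by hypothesis $h \in L^1(\BCx, \dx a)$, so the conclusion we want is $h \equiv 0$. Replacing $\varv$ by $\pi(n(x))\varv$ for $n(x) \in N$ and using the Whittaker equivariance $W_{\pi(n(x))\varv}(t(a)) = \psi(ax) W_\varv(t(a))$, which follows from $t(a)n(x) = n(ax)t(a)$ and the left $(\psi,N)$-equivariance of $W_\varv$, the hypothesis becomes
\[
\int_{\BCx} h(a)\, \psi(ax)\, \dx a = 0 \qquad \text{for all } x \in \BC.
\]
Since $h \in L^1(\BCx, \dx a)$ and $\dx a = da/\|a\|$, the function $a \mapsto h(a)/\|a\|$ lies in $L^1(\BC, da)$ (extending by $0$ at the origin, a null set), so the left-hand side is precisely its $\psi$-Fourier transform evaluated at $x$, in the sense of \eqref{def: Fourier}. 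Thus the Fourier transform of $h(a)/\|a\| \in L^1(\BC)$ vanishes identically, and by the injectivity of the Fourier transform on $L^1(\BC)$ (e.g.\ via Fourier inversion, valid since $\widehat{h/\|\cdot\|} \equiv 0$ is certainly integrable) we conclude $h(a)/\|a\| = 0$ for almost every $a$, hence $h(a) = 0$ for a.e.\ $a \in \BCx$.

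It remains to upgrade ``almost everywhere'' to ``everywhere'', which is where the smoothness hypotheses $f, g \in C^\infty(\BCx)$ enter. On the open dense subset of $\BCx$ where $W_\varv(t(a)) \neq 0$, the function $h(a) = (f(a)-g(a))W_\varv(t(a))$ is continuous, and being zero a.e.\ it is identically zero there, forcing $f(a) = g(a)$ and hence $f(a)W_\varv(t(a)) = g(a)W_\varv(t(a)) = 0$ at those points. At points where $W_\varv(t(a)) = 0$ the desired identity $f(a)W_\varv(t(a)) = g(a)W_\varv(t(a))$ holds trivially. So $f(a)W_\varv(t(a)) = g(a)W_\varv(t(a))$ for all $a \in \BCx$, which is the claim. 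One should also record that $W_\varv(t(a))$ is real analytic and not identically zero (it has the asymptotics of Lemma \ref{lem: asymptotic of W(t(a))}, in particular a nonzero leading term $\|a\|^\rho$ near $0$), so its zero set is indeed closed with empty interior; this is what makes the density argument legitimate.

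The main obstacle is purely a matter of bookkeeping around the measure-theoretic step: one must be careful that $h/\|\cdot\|$ genuinely belongs to $L^1(\BC, da)$ (not merely $L^1(\BCx, \dx a)$, but these coincide once we check the origin contributes nothing) so that the classical $L^1$ Fourier uniqueness theorem applies with the self-dual normalization fixed in \S\ref{sec: notations}. There is no analytic difficulty beyond this; the argument is a routine transcription of the real-variable lemma of Baruch and Mao to $\BC$, the only change being that the Fourier transform is now two-dimensional and the character is $\psi = \psi_\lambdaup$. I do not anticipate needing anything about $\pi$ beyond genericity and the already-cited Whittaker asymptotics.
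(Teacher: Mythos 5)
Your argument is correct and is essentially the paper's own proof: translate by $n(x)$, use $t(a)n(x)=n(ax)t(a)$ to turn the hypothesis into the vanishing of the $\psi$-Fourier transform of $\|a\|^{-1}\bigl(f(a)-g(a)\bigr)W_\varv(t(a))\in L^1(\BC)$, and conclude by $L^1$ Fourier uniqueness, with continuity of $(f-g)W_\varv(t(\cdot))$ upgrading ``a.e.'' to ``everywhere'' (a step the paper leaves implicit). Your closing remark about density and a nonzero leading term $\|a\|^{\rho}$ is unnecessary (and not what Lemma \ref{lem: asymptotic of W(t(a))} asserts), but your case split already handles the zero set of $W_\varv(t(\cdot))$ without it.
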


\begin{proof}
	The proof is literally the same as that of \cite[Lemma 2.1]{BaruchMao-Real}. We have
	\begin{align*}
	\int_{\BCx} f(a)W_{\pi(n(x))\varv}(t(a))\dx  a  = \hskip -1 pt \int_{\BCx} f(a)W_{ \varv}(t(a) n(x))\dx  a  = \hskip -1 pt \int_{\BCx} f(a)W_{ \varv}(t(a) ) \psi (ax) \dx  a.
	\end{align*}
	Hence follows the integrability of the first integral for all $x \in \BC$ and it is equal to the Fourier transform of the function $\|a\|\- f(a)W_{ \varv}(t(a) ) $. The identity in the lemma then yields the equality between the Fourier transform of $\|a\|\- f(a)W_{ \varv}(t(a) ) $ and that of $\|a\|\- g (a)W_{ \varv}(t(a) ) $. It follows that $f(a)W_\varv(t(a))=g(a)W_\varv(t(a))$ for all $a\in \BCx$.
\end{proof}

	\section{Bessel and relative Bessel distributions for $\GL_2 (\BC)$}\label{sec: distributions}

	Let $\langle \,  ,\shskip \rangle $ be a $G$-invariant nonzero inner product on $H$. We can
	normalize the Whittaker functional $L$ so that
	\begin{equation}\label{eq: inner product}
	\langle \varv_1,\varv_2\rangle =\int_{\BCx}W_{\varv_1}(t(a))\overline
	{W_{\varv_2}(t(a))}\dx a.
	\end{equation}
	
	\subsection{The normalized torus invariant functional}
	 We define
	\begin{equation}\label{def: P}
	P(\varv)=\frac{1}{L(\pi,1/2)}\int_{\BCx} W_\varv(t(a))\dx a, \hskip 10 pt \varv \in H_{\infty},
	\end{equation}
	in which the normalization factor $L(\pi, 1/2)$ is the central value of the $L$-function for $\pi$ (see for example \cite[\S 4]{Knapp}).
In view of Lemma \ref{lem: asymptotic of W(t(a))}, the integral above is absolutely convergent. Moreover, it may be verified that $P$ is a  nonzero continuous linear
	functional on $H_\infty$ satisfying
	\[
	P(\pi(a)\varv)= P(\varv), \hskip 10 pt a \in A, \ \varv\in H_\infty,
	\]
	 if the central character of $\pi $ is trivial.
	Hence $P$ is the nonzero   unique up to scalar continuous linear
	functional with the above invariance property.

	\subsection{Bessel and relative Bessel distributions}
	For every continuous linear functional $\lambda $ on
	$H_\infty$ and every $f\in C_c^\infty(G)$, the map $\varv\to \lambda(\pi(f)\varv)$ is continuous on $H$ by  \cite[Proposition 3.2]{Shalika-MO}. By the Riesz representation theorem, there exists a unique
	vector $\varv_{\lambda,\shskip f}$ such that
	\begin{equation}\label{def: vector v l f}
	\lambda (\pi(f)\varv)=\langle \varv,\varv_{\lambda,\shskip f}\rangle , \hskip 10 pt \text{ all }  \varv\in H_\infty.
	\end{equation}
	More concretely, for any orthonormal basis $\{ \varv_{\mathfrak i} \}$ in $H_{\infty}$, 
	\begin{align}\label{3eq: orthonormal basis}
\varv_{\lambda,\shskip f} = \sum  \overline {\lambda (\pi(f)\varv_{\mathfrak i})} \shskip \varv_{\mathfrak i} .
	\end{align}
	Then we  define the Bessel distribution by
	\begin{equation}\label{def: J (f)}
	J(f)=J_{\pi,\shskip \psi}(f)=\overline{L(\varv_{L, \shskip f})}
	\end{equation}
	and the relative Bessel distribution by
	\begin{equation}\label{def: I(f)}
	I(f)=I_{\pi,\shskip \psi}(f)=\overline{L(\varv_{P, \shskip f}) }.
	\end{equation}
	In view of \eqref{3eq: orthonormal basis}, for any orthonormal basis $\{ \varv_{\mathfrak i} \}$ in $H_{\infty}$, we have
	\begin{align}\label{3eq: J(f)}
	J (f) = \sum  L (\pi(f)\varv_{\mathfrak i}) \overline {L (\varv_{\mathfrak i})}, 
	\end{align}
	and
	\begin{align}\label{3eq: I(f)}
	I (f) = \sum  P (\pi(f)\varv_{\mathfrak i}) \overline {L (\varv_{\mathfrak i})}.
	\end{align}
	Letting $\widetilde f(g)=f(g^{-1})$, it follows from \cite[Corollary 23.7]{BaruchMao-Real} that $J(f)=L(\varv_{L,\shskip\widetilde f})$ and $I(f)=P(\varv_{L,\shskip\widetilde f})$.
	Thus
	\[
	I(f)=P(\varv_{L,\shskip\widetilde f})=\frac{1}{L(\pi,1/2)}\int_{\BCx}
	W_{\varv_{L,\shskip \widetilde f}}(t(a))\dx  a=\frac{1}{L(\pi,1/2)}\int_{\BCx}
	L(\pi(t(a))\varv_{L,\shskip \widetilde f})\dx a.
	\]
	Let $\rho_l$ denote the left translation of functions on $G$, namely, $(\rho_l (h) f)(g) = f (h\- g)$. 
	Note that for any $\varv\in H_\infty$
	\begin{align*}
	\pi((\rho_l(t(a))f)^{\sim}) \varv 
	& =    \int_G f(t(a\-) g\-)\pi(g) \varv \shskip dg \\
	& =\int_G f(g\-)\pi(g)\pi(t(a\-)) \varv\shskip dg = \pi (\widetilde f) \pi (t(a\-)) \varv,
	\end{align*}
	and therefore $\pi(t(a))\varv_{L,\shskip\widetilde f}=\varv_{L,\shskip (\rho_l(t(a))f)^{\sim}}$. Hence
	\begin{equation}\label{eq: I(f) = int J}
	I(f)=\frac{1}{L(\pi,1/2)}\int_{\BCx}J(\rho_l(t(a))f)\dx a.
	\end{equation}
	
    Finally, we define   Bessel distributions for $S$ in the same manner. Let $(\sigma, H)$ be a   unitary representation of $S$. Let $L$ be a nonzero continuous $\psi$-Whittaker functional on $H_\infty$. Similar as above, for any $h\in C_c^\infty(S)$, there exists a unique vector $\varv_{L,\shskip  h} \in H$ such that
   \begin{equation}\label{def: v l phi, S}
    L (\varv_{L,\shskip h})=\langle \varv,\varv_{L,\shskip  h} \rangle, \hskip 10 pt \text{ all } \varv \in H_\infty.
   \end{equation}
    Then one can define similarly the Bessel distribution $J_{\sigma,\shskip \psi}$ on $S$ associated with $\sigma$ by
    \begin{equation}\label{def: J(phi), S}
    J_{\sigma,\shskip \psi}(h)=\overline {L (\varv_{L,\shskip h})}.
    \end{equation}

	\section{Bessel functions for $\GL_2 (\BC)$}\label{sec: def of Bessel j}

	Bessel functions for $\PSL_2 (\BC)$ were first discovered by  Bruggeman and Motohashi \cite{B-Mo}, and later by Lokvenec-Guleska \cite{B-Mo2} for $\SL_2 (\BC)$, arising in their  Kuznetsov trace formulae (Bessel functions for spherical representations of $\SL_2 (\BC)$ however appeared much earlier in the work of Miatello and Wallach \cite{M-W-Kuz}).  Recently, the second author rediscovered Bessel functions for $\GL_2 (\BC)$ as an example of the Bessel functions for $\GL_n (\BC)$ occurring in the \Voronoi summation formula; see \cite[\S 3, 15, 17, 18]{Qi-Bessel}. 
	
	In the representation theoretic aspect, most important is a kernel formula in \cite[\S 18]{Qi-Bessel} for the action of the Weyl element in the Kirillov or Whittaker model of an irreducible unitary representation of $\GL_2 (\BC)$. This action is given by the Hankel transform over $\BCx$ with integral kernel the associated Bessel function. Such a kernel formula for $\GL_2 (\BR)$ and $\GL_2 (\BC)$ lies in the center of the representation theoretic approach to the Kuznetsov trace formula; see \cite{CPS} and \cite{Qi-Kuz}. In the case of $\GL_2 (\BR)$ or $\SL_2 (\BR)$, there are three proofs of the kernel formula in \cite[\S 8]{CPS},  \cite{Mo-Kernel} and \cite[Appendix 2]{BaruchMao-Real}. Methods in the latter two proofs were generalized onto  $\SL_2 (\BC)$ in  \cite{B-Mo-Kernel2,Mo-Kernel2} and \cite{Baruch-Kernel}, but there are  unpleasant restrictions on both of them  due to some  convergence issues. In \cite{B-Mo-Kernel2}, an integral representation of the Bessel function is used but it is valid only for  $ |\Re \mu | < \frac 1 8$. In \cite{Baruch-Kernel}, it requires that $\Re \mu  \neq 0$ and that the functions in the Kirillov model are compactly supported.\footnote{It should be noted that our parametrization is slightly different from theirs.} The approach in \cite{Qi-Bessel} is quite different. It is based on  the  sophisticated harmonic analysis by gamma factors and the Mellin transforms (see \cite[\S 1-3]{Qi-Bessel}). Also the ideas in  \cite[\S 8]{CPS} are followed and generalized  in \cite[\S 17]{Qi-Bessel} to $\GL_n (\BR)$ and $\GL_n (\BC)$.
	
	\subsection{The definition of $  \bfJ_{\mu,\shskip   m} (z)$} \label{sec: defn of J(z)}
	Let $\mu \in \BC $   and $m \in \BZ $.
	We define
	\begin{equation}\label{0def: J mu m (z)}
	J_{\mu,\shskip   m} (z) = J_{- 2\mu - \frac 12 m } \lp  z \rp J_{- 2\mu + \frac 12 m  } \lp  {\overline z} \rp.
	\end{equation}
	The function $J_{\mu,\shskip   m} (z)$ is well defined in the sense that the   expression on the right of \eqref{0def: J mu m (z)} is independent on the choice of the argument of $z$ modulo $2 \pi$. Next, we define
	\begin{equation}\label{0eq: defn of Bessel}
	\bfJ_{ \mu,\shskip   m} \lp z \rp =
	\left\{
	\begin{split}
	& \frac {2 \pi^2} {\sin (2\pi \mu)} \lp J_{\mu,\shskip   m} (4 \pi \sqrt z) -  J_{-\mu,\shskip   -m} (4 \pi \sqrt z) \rp, \hskip 6.5 pt \text {if } m \text{ is even},\\
	& \frac {2 \pi^2 i} {\cos (2\pi \mu)} \lp J_{\mu,\shskip   m} (4 \pi \sqrt z) + J_{-\mu,\shskip   -m} (4 \pi \sqrt z) \rp, \hskip 5 pt \text {if }  m \text{ is odd},
	\end{split}
	\right.
	\end{equation}
	where $\sqrt z$ is the principal branch of the square root, and it is understood that in the nongeneric case when  $4 \mu \in 2\BZ + m$ the right hand side should be replaced by its limit.  $\bfJ_{ \mu,\shskip   m} \lp z \rp$ is a well defined function on $\BCx$ only when $m$ is even, but it becomes    well defined after multiplying the factor $\sqrt {  z / |z|}  $ when $m$ is odd.
	
	For later use, the following crude estimates for $\bfJ_{ \mu,\shskip   m} \lp z \rp$  are sufficient. See for example  \cite[(2.28)]{Qi-II-G}. 
	
	\begin{lem}\label{4lem: bounds for J}
		Suppose that $|\Re \mu | < \rho$. We have
		\begin{equation*}
		\bfJ_{ \mu,\shskip   m} \lp z \rp \lll \left\{\begin{split}
		& 1 / \left| z  \right|^{2 \rho}, \hskip 11 pt \text{ if } |z| \leqslant 1, \\
		& 1 /{\textstyle \sqrt {|z|}}, \hskip 10 pt \text{ if } |z| > 1,
		\end{split}\right.
		\end{equation*}
		with the implied constant depending only on $\mu$, $\rho$ and $m$. In particular,
		\begin{equation*}
		\bfJ_{ \mu,\shskip   m} \lp z \rp \lll   1 / \left| z  \right|^{2 \rho} + 1 / {\textstyle \sqrt {|z|}}.
		\end{equation*}
	\end{lem}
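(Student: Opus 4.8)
The plan is to reduce the bounds on $\bfJ_{\mu,\shskip m}(z)$ to standard estimates for the classical $J$-Bessel function $J_\nu(w)$ of one complex variable, applied to each of the two factors in the definition \eqref{0def: J mu m (z)}. Recall that $\bfJ_{\mu,\shskip m}(z)$ is, up to a bounded constant (bounded since $|\Re\mu|<\rho$ keeps $\mu$ away from the nongeneric poles, where limits are taken), a combination of $J_{\mu,\shskip m}(4\pi\sqrt z)$ and $J_{-\mu,\shskip -m}(4\pi\sqrt z)$, so it suffices to bound $J_{\pm\mu,\shskip \pm m}(4\pi\sqrt z) = J_{\mp 2\mu \mp \frac12 m}(4\pi\sqrt z)\, J_{\mp 2\mu \pm \frac12 m}(\overline{4\pi\sqrt z})$. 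Writing $w = 4\pi\sqrt z$, we have $|w| = 4\pi|z|^{1/2}$, and we must control $J_{\nu_1}(w)J_{\nu_2}(\overline w)$ with $\nu_1,\nu_2 = \mp 2\mu \mp \frac12 m$, noting $|\Re\nu_j| < 2\rho + \frac12|m|$ is bounded in terms of $\mu,\rho,m$.

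For $|z|\leqslant 1$, i.e.\ $|w|$ bounded, I would use the power-series definition $J_\nu(w) = \sum_{k\geqslant 0}\frac{(-1)^k}{k!\,\Gamma(\nu+k+1)}(w/2)^{\nu+2k}$, which gives $J_\nu(w) \lll |w/2|^{\Re\nu}$ uniformly for $|w|\leqslant C$ and $\nu$ in a compact set avoiding the negative-integer poles of $1/\Gamma$ (and the worst-case exponent among the four sign choices of $\pm 2\mu \pm \frac12 m$ is $-2\Re\mu - \frac12|m| \geqslant -2\rho - \frac12|m|$, but since $J_{\nu_1}(w)J_{\nu_2}(\overline w)$ has total homogeneity $\nu_1+\nu_2 = \mp 4\mu$, the product is $\lll |w|^{-4\Re\mu} \lll |w|^{-4\rho} = |z|^{-2\rho}\cdot(\text{const})$ after accounting for $|w|=4\pi|z|^{1/2}$); one must be slightly careful to pick, for each term $J_{\pm\mu,\pm m}$, the matching pair of orders so the product homogeneity is exactly $\mp 4\mu$, but this is precisely how \eqref{0def: J mu m (z)} is set up. This yields the first branch $1/|z|^{2\rho}$.

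For $|z|>1$, i.e.\ $|w|>4\pi$, I would invoke the standard asymptotic/uniform bound $J_\nu(w) \lll |w|^{-1/2}\,e^{|\Im w|}$ valid for $|w|$ bounded below and $\nu$ in a compact set (this follows from the classical asymptotic expansion $J_\nu(w) = \sqrt{2/(\pi w)}\big(\cos(w - \tfrac12\nu\pi - \tfrac14\pi) + O(1/|w|)\big)$, or more cleanly from an integral representation). Since $w = 4\pi\sqrt z$ and $\overline w = 4\pi\overline{\sqrt z}$ have $\Im w = -\Im\overline w$, the exponential factors $e^{|\Im w|}$ and $e^{|\Im\overline w|} = e^{|\Im w|}$ appear to the first power each — this would give $e^{2|\Im w|}$, which is not bounded. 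The resolution, and the main subtlety, is that this exponential growth is spurious: with the principal branch $\sqrt z$, one has $\Im w$ and $\Im\overline w$ of opposite sign, and the relevant combination in $J_{\nu_1}(w)J_{\nu_2}(\overline w)$ — after writing each $J$ via its Hankel-function (or $e^{\pm iw}$) decomposition — produces terms $e^{i(w-\overline w)} = e^{i\cdot 2i\Im w} = e^{-2\Im w}$ and $e^{-i(w-\overline w)} = e^{2\Im w}$ and $e^{\pm i(w+\overline w)} = e^{\pm 2i\Re w}$; the cross terms $e^{\pm 2i\Re w}$ are bounded, while the diagonal terms $e^{\pm 2\Im w}$ are exactly the ones that cancel against the complementary sign choice hidden in the combination defining $\bfJ_{\mu,\shskip m}$ (this is the standard mechanism by which $\bfJ_{\mu,\shskip m}(z)$, unlike $J_{\mu,\shskip m}(4\pi\sqrt z)$ alone, is bounded on $|z|>1$). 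I would therefore carry out the estimate at the level of $\bfJ_{\mu,\shskip m}$ directly rather than factor by factor, expand each classical $J_\nu(4\pi\sqrt z)$ and $J_\nu(4\pi\overline{\sqrt z})$ into the $e^{\pm 4\pi i\sqrt z}$ and $e^{\pm 4\pi i\overline{\sqrt z}}$ oscillatory pieces times $|z|^{-1/4}$-type amplitudes with uniformly controlled implied constants, and observe that the purely-exponential (non-oscillatory-in-$\Re\sqrt z$) contributions are precisely those killed by the subtraction in the $m$ even case (resp.\ addition in the $m$ odd case). The remaining terms are each $\lll |z|^{-1/4}\cdot|z|^{-1/4} = |z|^{-1/2}$. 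Finally I would combine the two regimes into the single stated bound $\bfJ_{\mu,\shskip m}(z) \lll 1/|z|^{2\rho} + 1/\sqrt{|z|}$, which is immediate since on $|z|\leqslant 1$ the first term dominates and on $|z|>1$ the second does. The only genuine obstacle is the bookkeeping that shows the exponentially large pieces cancel rather than just estimating them; alternatively, one can simply cite \cite[(2.28)]{Qi-II-G} as the excerpt already suggests, in which case this whole argument is replaced by a reference.
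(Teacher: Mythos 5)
The paper offers no internal proof of this lemma --- it simply points to \cite[(2.28)]{Qi-II-G} --- so your argument is a reconstruction rather than a parallel of anything in the text; as a reconstruction, the two-regime strategy is the right one, and the cancellation you invoke for $|z|>1$ is genuine and easy to make precise. With $w=4\pi\sqrt z$, $\nu_1=-2\mu-\frac12 m$, $\nu_2=-2\mu+\frac12 m$, write $J_{\nu}=\frac12\bigl(H^{(1)}_{\nu}+H^{(2)}_{\nu}\bigr)$ and $J_{-\nu}=\frac12\bigl(e^{\pi i\nu}H^{(1)}_{\nu}+e^{-\pi i\nu}H^{(2)}_{\nu}\bigr)$: the mixed products $H^{(1)}_{\nu_1}(w)H^{(2)}_{\nu_2}(\bar w)$ and $H^{(2)}_{\nu_1}(w)H^{(1)}_{\nu_2}(\bar w)$ --- the ones carrying the dangerous factors $e^{\mp2\Im w}$ --- enter the combination $J_{\mu,\shskip m}\mp J_{-\mu,\shskip -m}$ with coefficient $\frac14\bigl(1\mp e^{\pm\pi i(\nu_1-\nu_2)}\bigr)=\frac14\bigl(1\mp(-1)^{m}\bigr)$, which vanishes exactly for the parity of $m$ prescribed in \eqref{0eq: defn of Bessel}; the surviving products $H^{(1)}_{\nu_1}(w)H^{(1)}_{\nu_2}(\bar w)$ and $H^{(2)}_{\nu_1}(w)H^{(2)}_{\nu_2}(\bar w)$ are $O(|w|^{-1})=O(|z|^{-1/2})$ because $e^{-\Im w-\Im\bar w}=e^{\Im w+\Im\bar w}=1$. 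You should write this short computation out, since it is the entire content of the bound for $|z|>1$, but there is no obstacle in it.

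The one genuine flaw is the parenthetical claim that $|\Re\mu|<\rho$ keeps $\mu$ away from the nongeneric points: it does not. The nongeneric locus $4\mu\in2\BZ+m$ meets every strip $|\Re\mu|<\rho$; for $m$ even it contains $\mu=0$, which is precisely the spherical unitary principal series the paper later needs. At such $\mu$ the prefactor $2\pi^{2}/\sin(2\pi\mu)$ (or $2\pi^{2}i/\cos(2\pi\mu)$) is infinite, the definition \eqref{0eq: defn of Bessel} is a limit, and your small-$|z|$ estimate --- bound each factor $J_{\pm\mu,\shskip\pm m}$ by its power series and multiply by the ``constant'' $1/|\sin(2\pi\mu)|$ --- collapses; logarithmic terms appear in the limit. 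The repair is easy but must be stated: either bound the combination on a small circle of $\mu$-values around the nongeneric point and use analyticity in $\mu$ together with the maximum principle, or compute the limit directly and absorb the resulting $\log(1/|z|)$ factors into $|z|^{-2\rho}$, which is possible precisely because $\rho>|\Re\mu|$ is strict. Note that the Hankel computation above needs no such care, since the surviving coefficients $\bigl(1-e^{\mp4\pi i\mu}\bigr)/\sin(2\pi\mu)=\pm2ie^{\mp2\pi i\mu}$ are entire in $\mu$; it is only the $|z|\leqslant1$ regime of your write-up that has this gap. With that point fixed (or with the lemma simply cited from \cite[(2.28)]{Qi-II-G}, as the paper does), your proof goes through, with constants depending only on $\mu$, $\rho$, $m$ as required.
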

	
	\subsection{Bessel functions for $\GL_2 (\BC)$}\label{sec: Bessel, GL2}
	As defined in \S \ref{sec: notations}, let $\pi = \pi_{\mu, \shskip  m}$ be a principal series representation of $G = \GL_2 (\BC)$, and, for $\lambdaup \in \BCx$, let $\psi = \psi_{\lambdaup} $ be a nontrivial additive character on $N$. The central character $\omega_{\pi} (z (c)) = c /|c|$ if $m$ is odd and $ \omega_{\pi} (z (c)) \equiv 1$ if $m$ is even. We define a function $j = j_{\pi, \shskip \psi}$ supported on the open Bruhat cell $X = B \varw_0 B$ such that
	\begin{equation}\label{4eq : j = J}
	j_{\pi, \shskip \psi} ( t(a) \varw_0 ) = \left\{
	\begin{split}
	& |\lambdaup a| \bfJ_{ \mu,\shskip   m} \lp - \lambdaup^2 a \rp,  \hskip 42 pt \text {if } m \text{ is even},\\
	&  - i |\lambdaup| {\textstyle \sqrt {    |a| a}} \bfJ_{ \mu,\shskip   m} \lp - \lambdaup^2 a \rp,  \hskip 5 pt \text {if }  m \text{ is odd},
	\end{split}
	\right.
	\end{equation}
	 \footnote{In \cite{Qi-Bessel}, as well as  \cite{CPS}, the measure  is not normalized by the factor $\sqrt {\|\lambdaup\|}$ and   the choice of   Weyl element  is $\varw$ instead of $\varw_0$, so the Bessel functions in \cite[Proposition 18.5]{Qi-Bessel} are slightly different.} and that $j_{\pi, \shskip \psi}$ is left and right $  (\psi, N)$-equivariant and also $(\omega_{\pi}, Z)$-equivariant, namely,
		\begin{align}\label{4eq: psi variant}
		j_{\pi, \shskip \psi} \lp n(x)z(c) t(a) \varw_0 n(y) \rp = \psi (x) \psi (y) \omega_{\pi} (c ) j_{\pi, \shskip \psi} ( t(a) \varw_0 ).
		\end{align}
		According to \cite[\S 18]{Qi-Bessel}, we have
	\begin{equation}\label{4eq: Weyl element action}
	W_{\varv} \lp
	t (b)
	\varw_0 \rp
	=  \int_{\BCx } \omega_\pi (  a  )\-  j ( t(ab) \varw_0) W_{\varv} \lp
	t (a)
	\rp \dx  a, \hskip 10 pt \text{ all }  \varv \in H_{\infty}.
	\end{equation}
As an easy consequence of \eqref{4eq: psi variant} and \eqref{4eq: Weyl element action}, we have the following formula.
	
	\begin{thm}\label{thm: W(g) on BwB}
		Let $\varv \in H_{\infty}$. Then
		\begin{align}
		W_{\varv} \lp
		g \rp
		=  \int_{\BCx } j \big( g t \big( a\- \big) \big)  W_{\varv} \lp
		t (a)
		\rp \dx  a,
		\end{align}
		for all $ g  \in B \varw_0 B$.
	\end{thm}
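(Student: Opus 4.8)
The plan is to reduce the general Bruhat-cell statement to the special case \eqref{4eq: Weyl element action}, which is already available, by exploiting the equivariance property \eqref{4eq: psi variant} of the Bessel function $j = j_{\pi,\shskip\psi}$ together with the Whittaker transformation law $W_{\varv}(n(x) g) = \psi(x) W_{\varv}(g)$. The key point is that any $g$ in the open Bruhat cell $B\varw_0 B$ admits a Bruhat decomposition $g = n(x) z(c) t(b) \varw_0 n(y)$ for suitable $x, y \in \BC$, $c \in \BCx$, $b \in \BCx$, so it suffices to verify the asserted formula for $g$ of this form and check that both sides transform the same way under the relevant left and right translations.

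First I would write $g = n(x) z(c) t(b) \varw_0 n(y)$ and compute the left-hand side: by the Whittaker equivariance on the left under $N$ and the central character, $W_{\varv}(g) = \psi(x) \shskip \omega_{\pi}(c) \shskip W_{\pi(n(y))\varv}(z(c) t(b) \varw_0)$. Actually it is cleaner to push the $n(y)$ into the vector and use $W_{\varv'}(z(c) t(b)\varw_0) = \omega_{\pi}(c) W_{\varv'}(t(b)\varw_0)$ with $\varv' = \pi(n(y))\varv$, then apply \eqref{4eq: Weyl element action} to $\varv'$, giving
\begin{align*}
W_{\varv}(g) = \psi(x) \shskip \omega_{\pi}(c) \int_{\BCx} \omega_{\pi}(a)\- j(t(ab)\varw_0) W_{\pi(n(y))\varv}(t(a)) \dx a.
\end{align*}
Now $W_{\pi(n(y))\varv}(t(a)) = W_{\varv}(t(a) n(y)) = \psi(ay) W_{\varv}(t(a))$, since $t(a) n(y) t(a)\- = n(ay)$. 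So the integrand acquires a factor $\psi(ay)$.

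Next I would compute the right-hand side of the claimed identity for the same $g$. We have $g t(a\-) = n(x) z(c) t(b) \varw_0 n(y) t(a\-)$; since $n(y) t(a\-) = t(a\-) n(ay)$, this equals $n(x) z(c) t(b) \varw_0 t(a\-) n(ay) = n(x) z(c) t(b) t(a) \varw_0 n(ay)$ after moving $t(a\-)$ through $\varw_0$ (noting $\varw_0 t(a\-) = t(a) \varw_0$ up to a central/diagonal correction that must be tracked carefully — here one uses $\varw_0 \, \mathrm{diag}(a\-,1) = \mathrm{diag}(1, a\-)\varw_0 = \mathrm{diag}(a\-, a\-) \mathrm{diag}(a, 1)\varw_0 = z(a\-) t(a)\varw_0$, so in fact $g t(a\-) = n(x) z(c a\-) t(ab) \varw_0 n(ay)$). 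Then \eqref{4eq: psi variant} gives $j(g t(a\-)) = \psi(x) \psi(ay) \omega_{\pi}(c a\-) j(t(ab)\varw_0) = \psi(x)\psi(ay)\omega_{\pi}(c)\omega_{\pi}(a)\- j(t(ab)\varw_0)$, and plugging into $\int_{\BCx} j(g t(a\-)) W_{\varv}(t(a)) \dx a$ reproduces exactly the expression for $W_{\varv}(g)$ obtained above. This completes the verification.

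The main obstacle is purely bookkeeping: keeping the diagonal and central factors straight when commuting $t(a\-)$, $t(b)$, $z(c)$, and $n(y)$ past $\varw_0$, and making sure the character $\psi$, the central character $\omega_{\pi}$, and the argument $t(ab)\varw_0$ of $j$ match on both sides after the substitution $a \mapsto a b\-$ in the integral (or equivalently checking the normalization of the measure $\dx a = dz/\|z\|$ is translation invariant, which it is since $\dx a$ is a Haar measure on $\BCx$). One should also note the convergence of the integral, which follows from the rapid decay of $W_{\varv}(t(a))$ at infinity (Lemma \ref{lem: asymptotic of W(t(a))}) together with the bounds for $\bfJ_{\mu,\shskip m}$ in Lemma \ref{4lem: bounds for J}, exactly as already used to justify \eqref{4eq: Weyl element action}; since $|\Re\mu| < \frac12$ one may choose $\rho$ with $2\rho < 1$ and the near-zero contribution $\|a\|^{\rho'} \cdot |a|^{-2\rho}$ is integrable for $\rho'$ close to $\frac12$.
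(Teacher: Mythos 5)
Your argument is correct and is exactly the route the paper intends: the paper states the theorem as an immediate consequence of the equivariance \eqref{4eq: psi variant} and the kernel formula \eqref{4eq: Weyl element action}, and your computation (writing $g = n(x)z(c)t(b)\varw_0 n(y)$, using $\varw_0 t(a\-) = z(a\-)t(a)\varw_0$ and $t(a)n(y)t(a)\- = n(ay)$, and matching the factors $\psi(x)\psi(ay)\omega_{\pi}(c)\omega_{\pi}(a)\-$ on both sides) supplies precisely the omitted bookkeeping; the absolute convergence is indeed inherited from \eqref{4eq: Weyl element action} since the integrands agree up to unimodular factors. The only trivial blemish is the aside about a substitution $a \mapsto a b\-$, which your own computation shows is not needed.
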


	  It readily follows from (\ref{4eq : j = J}, \ref{4eq: psi variant}) that the restriction of the Bessel function $j_{\pi, \shskip \psi}$ on   $S = \SL_2 (\BC)$  is given by
	\begin{align}\label{4eq: j = J, SL}
	j_{\pi, \shskip \psi} (  s (a) \varw ) =  (-1)^m \left|\lambdaup   a^2 \right|  \bfJ_{ \mu,\shskip   m} \lp   \lambdaup^2   a^2 \rp,
	\end{align}
	and
	\begin{align}\label{4eq: psi variant, SL}
	j_{\pi, \shskip \psi} \lp n(x)   s (a) \varw n(y) \rp = \psi (x) \psi (y)  j_{\pi, \shskip \psi} (  s (a) \varw ).
	\end{align}
	
	\begin{rem}
		Of course, the factor $ \omega_\pi (  a  )\-$ in \eqref{4eq: Weyl element action} disappears if $\pi $ is a representation of $\PGL_2 (\BC)$ in the case when $m$ is even. Otherwise, it will be gone if one uses $s (a)$ instead of $t (a)$ for torus elements, so the kernel formula  looks simpler for $\SL_2 (\BC)$,
		\begin{equation}
		W_{\varv} \lp
		s (b)
		\varw  \rp
		=  \int_{\BCx }   j  ( s(ab) \varw ) W_{\varv} \lp
		s (a)
		\rp \dx  a,
		\end{equation}
		for all $\varv \in H_{\infty}$.
		See \cite[Theorem 2]{Mo-Kernel} and \cite[Theorem 2.3]{Baruch-Kernel}.
	\end{rem}
    We extend the definition of $j $ to the whole group $G$ by setting $j (g)=0$ for $g\notin X $.
\begin{prop}\label{thm: local integrability}
		Assume that $\pi $ is unitary. Then $j $ is locally integrable over $G$. Namely, for any $f\in C_c^\infty(G)$, we have
\[
\int_G \left|f(g)j (g) \right| dg<\infty.
\]
	\end{prop}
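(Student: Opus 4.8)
The plan is to reduce the claim to a local integrability statement near the complement of the open Bruhat cell $X = B\varw_0 B$, which is where $j$ has been extended by zero, and then to use the explicit asymptotics of $\bfJ_{\mu,\shskip m}$ from Lemma \ref{4lem: bounds for J} together with the known Bessel bounds to check convergence there. First I would use the Bruhat decomposition to write a point $g \in X$ in coordinates $g = n(x) z(c) t(a) \varw_0 n(y)$ (or, working on $S$, $g = n(x) s(a) \varw n(y)$), and compute the Haar measure $dg$ in these coordinates; the Jacobian is a product of the measures $dx$, $d^{\times}c$, $d^{\times}a$, $dy$ up to a power of $\|a\|$ coming from the modulus of the torus acting on $N$. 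Since $f \in C_c^{\infty}(G)$ is compactly supported and smooth, the integration in the $x$, $c$ and $y$ variables is over a compact set and contributes a bounded, integrable factor, so the whole question collapses to whether
\[
\int_{\BCx} \big| \bfJ_{\mu,\shskip m}(-\lambdaup^2 a) \big| \cdot |\lambdaup a| \cdot \|a\|^{\kappa}\, \dx a
\]
is finite over the region where $a$ ranges in a set that is compact in $\BCx$ but whose closure in $\BC$ may include $0$ (this is precisely the part of $\partial X$ that matters), for the appropriate exponent $\kappa$ dictated by the measure computation.

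The key step is then the estimate near $a \to 0$. By Lemma \ref{4lem: bounds for J}, choosing $\rho$ with $|\Re\mu| < \rho < \tfrac12$ — which is possible exactly because $\pi$ is \emph{unitary}, so either $\Re\mu = 0$ or $\mu \in (0,\tfrac12)$ — we have $\bfJ_{\mu,\shskip m}(-\lambdaup^2 a) \lll |a|^{-2\rho}$ as $|a| \to 0$. Combined with the factor $|a| = |a|^{1}$ and the measure factor, the integrand near zero is $O(|a|^{1 - 2\rho} \|a\|^{\kappa - 1})$ in the radial variable; since $2\rho < 1$, one gets a strictly positive power of $|a|$ once the dimension-two radial Jacobian $|a|\, d|a|$ is included, so the integral converges at $a = 0$. (One must of course carry out the bookkeeping of $\kappa$ honestly; the point is only that the exponent stays above the critical value because $\rho$ can be taken strictly below $\tfrac12$.) Away from zero, on any compact subset of $\BCx$, $\bfJ_{\mu,\shskip m}$ is continuous hence bounded, so there is nothing to check there beyond the compactness of $\mathrm{supp}\, f$. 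The same analysis, via \eqref{4eq: j = J, SL}, handles the restriction to $S$, which is the only other boundary stratum.

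I expect the main obstacle to be purely organizational: carrying the Bruhat coordinate change through carefully enough to pin down the exact power of $\|a\|$ in the Haar measure and to make sure the cutoff coming from $\mathrm{supp}\, f$ genuinely confines the non-$a$ variables to compacta (so that Fubini applies and the $x$, $c$, $y$ integrations are harmless), and then matching that power against the small-$a$ growth of the Bessel function. There is no deep analytic difficulty — the unitarity of $\pi$ is exactly what guarantees $\Re\mu$ is small enough for the boundary integral to be absolutely convergent, and Lemma \ref{4lem: bounds for J} does all the heavy lifting on the growth side — but one should be mindful that for odd $m$ the Bessel function is only well defined after the twist by $\sqrt{z/|z|}$, which has modulus one and therefore does not affect any of the absolute-value estimates; I would remark on this explicitly so the odd and even cases can be treated uniformly.
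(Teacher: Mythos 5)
Your reduction rests on a claim that is false, and as a result you analyze the wrong region of degeneration. Writing $g=n(x)z(c)t(a)\varw_0 n(y)$, the lower-left entry of $g$ is $c$ and one finds $a=-\det(g)/c^2$, $x,y$ equal to matrix entries divided by $c$. Hence on $\mathrm{supp}\,f\cap X$ (with $f\in C_c^\infty(G)$) the coordinate $a$ is \emph{bounded away from zero}, and the approach to the boundary stratum $G\smallsetminus X=B$ corresponds to $c\to 0$, i.e.\ $|a|\to\infty$ with $|x|,|y|\to\infty$ simultaneously. So the set of $(x,c,y)$ meeting $\mathrm{supp}\,f$ is compact for each fixed $a$ but its size is \emph{not} uniform in $a$: the $(x,y)$-integral of $|f|$ grows polynomially as $|a|\to\infty$. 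Your assertion that the $x,c,y$ integrations ``contribute a bounded, integrable factor'' is therefore unjustified, and the region you then scrutinize ($a\to 0$, where you invoke unitarity and the small-argument bound of Lemma \ref{4lem: bounds for J}) never occurs on the support; meanwhile the genuine difficulty, the simultaneous blow-up of $a,x,y$ near $B$, is dismissed with ``nothing to check there.''

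What is missing is exactly the input the paper takes from the appendix: the $(N,N)$ orbital integral $J(a,c,|f|)$ of \eqref{def: J (a,c,f)}. Proposition \ref{prop:A1} shows that the inner integral over $x,y,c$ vanishes for small $|a|$ (so no small-$a$ analysis and no use of unitarity is needed here) and is $O(|a|^{1+\varepsilon})$ for large $|a|$ in the $t(a)$-normalization. Combining this with the large-argument bound $j(t(a)\varw_0)\lll|a|^{1/2}$ from \eqref{4eq : j = J} and Lemma \ref{4lem: bounds for J}, and with the Haar-measure factor $|a|^{-2}$ on $X=NA\varw_0N$, gives an integrand $\lll|a|^{-1/2+\varepsilon}$ against $\dx a$ at infinity, which converges. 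Without a quantitative bound of this kind on the growth of the $(x,y)$-volume (or some equivalent device), your argument does not establish local integrability near $B$; also note that the complement of $X$ is the Borel subgroup, not $S$, so ``the restriction to $S$'' is not a boundary stratum to be handled.
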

\begin{proof} Recall that the measure on $X = NA \varw_0 N$ is given by $ d g = |a|^{-2} d x \hskip 1 pt \dx  a   \hskip 1 pt \dx c \hskip 1 pt d y$ if $g = n(x)z(c)t(a)\varw_0n(y)$. We have
\begin{align*}
 \int_G |f(g)j (g)| dg
= \ & \int_{X}|f(g)j (g)|dg \\
= \ &\int |j (t(a)\varw_0)|\left(\iiint |f(n(x)z(c)t(a)\varw_0n(y))|dxdy\dx  c \right) |a|^{-2}\dx  a \\
= \ & \int |j (t(a)\varw_0)| \lp  \int  J \big( \sqrt a,   c, |f| \big)    \dx  c  \rp |a|^{-2} \dx  a,
\end{align*}
where $ J \big( \sqrt a,   c, |f| \big) $ is the orbital integral defined as in \eqref{def: J (a,c,f)}.  In view of Proposition \ref{prop:A1},  the inner integral over $c$ is zero for small $|a|$    and of the order $|a|^{1+\varepsilon}$ for large  $|a|$. In view of  Lemma \ref{4lem: bounds for J} and the expression of $j = j_{\pi, \shskip \psi}$ as in \eqref{4eq : j = J}, $j   (t(a) \varw_0)$ is bounded by $ {|a|}^{\frac 1 2}$ when $|a|$ is large. It is now clear that the integral above is convergent.
\end{proof}

	\section {Bessel distributions for $\GL_2(\BC)$}
	
	In this section, we show that the Bessel distribution
	$J=J_{\pi,\shskip \psi}$ 
	  is represented by the Bessel function $j=j_{\pi,\shskip \psi}$. 
	
	\begin{lem}{\label{le41}}
		Let $\varv\in H_\infty$, then
		\[
		\int_{\BCx}J(\rho_r(t(a))f)W_{\varv}(t(a))\dx a=\int_Gf(g)W_{\varv}(g)dg,
		\]
		in which $\rho_r$ is the right translation, that is, $(\rho_r (h) f)(g) = f (  g h)$.
	\end{lem}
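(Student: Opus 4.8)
The plan is to evaluate the two sides of the identity separately and to observe that each of them equals $L(\pi(f)\varv)$. The right-hand side is immediate: since $W_\varv(g) = L(\pi(g)\varv)$ by \eqref{def: Wv(g)}, and since for $f \in C_c^\infty(G)$, $\varv \in H_\infty$ the $H_\infty$-valued integral $\int_G f(g)\shskip \pi(g)\varv\shskip dg$ converges to $\pi(f)\varv$ in $H_\infty$ while $L$ is continuous there, one may pull $L$ through the integral to obtain
\[
\int_G f(g)\shskip W_\varv(g)\shskip dg = \int_G f(g)\shskip L(\pi(g)\varv)\shskip dg = L(\pi(f)\varv).
\]

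The left-hand side rests on the translation identity $\varv_{L,\shskip \rho_r(t(a))f} = \pi(t(a))\shskip \varv_{L,\shskip f}$. First I would note, by the change of variable $g \mapsto g\shskip t(a^{-1})$ in the defining integral and the fact that $G$ is unimodular, that $\pi(\rho_r(t(a))f) = \pi(f)\shskip \pi(t(a^{-1}))$. Then, for every $\varv \in H_\infty$,
\[
L\big(\pi(\rho_r(t(a))f)\varv\big) = L\big(\pi(f)\shskip \pi(t(a^{-1}))\varv\big) = \big\langle \pi(t(a^{-1}))\varv,\shskip \varv_{L,\shskip f}\big\rangle = \big\langle \varv,\shskip \pi(t(a))\shskip \varv_{L,\shskip f}\big\rangle,
\]
the middle equality being \eqref{def: vector v l f} and the last one the unitarity of $\pi$. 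Comparing this with $L(\pi(\rho_r(t(a))f)\varv) = \langle \varv,\shskip \varv_{L,\shskip \rho_r(t(a))f}\rangle$ and using the uniqueness of the representing vector in \eqref{def: vector v l f} proves the claim, so that by \eqref{def: J (f)} and \eqref{def: Wv(g)},
\[
J(\rho_r(t(a))f) = \overline{L\big(\varv_{L,\shskip \rho_r(t(a))f}\big)} = \overline{L\big(\pi(t(a))\shskip \varv_{L,\shskip f}\big)} = \overline{W_{\varv_{L,\shskip f}}(t(a))}.
\]

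Substituting this into the left-hand side and applying the Whittaker-model inner product formula \eqref{eq: inner product} then gives
\[
\int_{\BCx} J(\rho_r(t(a))f)\shskip W_\varv(t(a))\shskip \dx a = \int_{\BCx} W_\varv(t(a))\shskip \overline{W_{\varv_{L,\shskip f}}(t(a))}\shskip \dx a = \langle \varv,\shskip \varv_{L,\shskip f}\rangle = L(\pi(f)\varv),
\]
the last step being \eqref{def: vector v l f} once more. This matches the right-hand side computed above, which finishes the argument.

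Two points need care, and the second is where the real work lies. The absolute convergence of the integral over $\BCx$ follows from Lemma \ref{lem: asymptotic of W(t(a))}, exactly as for \eqref{eq: inner product} and \eqref{def: P}: both $W_\varv(t(a))$ and $W_{\varv_{L,\shskip f}}(t(a))$ decay rapidly at infinity and are $O(\|a\|^{\rho})$ near $a = 0$, so their product is integrable against $\dx a$. More delicate is that one must know $\varv_{L,\shskip f}$ to be a smooth vector, so that $W_{\varv_{L,\shskip f}}$ is an honest function, $L(\pi(t(a))\varv_{L,\shskip f})$ is defined, and \eqref{eq: inner product} applies pointwise to the pair $(\varv,\shskip \varv_{L,\shskip f})$. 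I would deduce $\varv_{L,\shskip f} \in H_\infty$ by repeating the computation above with $t(a)$ replaced by an arbitrary $g \in G$, which yields $\pi(g)\varv_{L,\shskip f} = \varv_{L,\shskip \rho_r(g)f}$, and then by invoking the smoothness of $g \mapsto \rho_r(g)f$ as a map into $C_c^\infty(G)$ together with the continuity of $f \mapsto \varv_{L,\shskip f}$ coming from \cite[Proposition 3.2]{Shalika-MO}; this exhibits $g \mapsto \pi(g)\varv_{L,\shskip f}$ as smooth.
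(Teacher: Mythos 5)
Your proof is correct and follows essentially the same route as the paper's: establish $\varv_{L,\shskip \rho_r(t(a))f}=\pi(t(a))\varv_{L,\shskip f}$ from $\pi(\rho_r(t(a))f)=\pi(f)\pi(t(a^{-1}))$, deduce $J(\rho_r(t(a))f)=\overline{W_{\varv_{L,\shskip f}}(t(a))}$, and then conclude via the normalization \eqref{eq: inner product} and $\langle \varv,\varv_{L,\shskip f}\rangle=L(\pi(f)\varv)=\int_G f(g)W_\varv(g)\,dg$. The additional remarks on absolute convergence and on $\varv_{L,\shskip f}$ being smooth are sensible refinements of points the paper leaves implicit, but they do not change the argument.
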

   \begin{proof}The proof is similar to that of   \cite[Lemma 7.1]{BaruchMao-Real}.
   For any $\varv\in H_\infty$, we have
   \begin{align*}
    \pi(\rho_r(t(a))f) \varv 
    & =    \int_G f(gt(a))\pi(g) \varv \shskip dg \\
    & =\int_G f(g)\pi(g)\pi(t(a^{-1})) \varv\shskip dg = \pi (f) \pi(t(a^{-1})) \varv.
   \end{align*}
   Thus it follow from  \eqref{def: vector v l f}, \eqref{def: J (f)} and \eqref{def: Wv(g)} that $\varv_{L,\hskip 1 pt \rho_r(t(a))f}=\pi(t(a))\varv_{L, \shskip f}$ and $J (\rho_r(t(a))f)=\overline {W_{\varv_{L, \shskip f}}(t(a))}$. Hence, in view of \eqref{eq: inner product}, we have
   \[
   \int_{\BCx} J (\rho_r(t(a))f)W_\varv(t(a))\dx  a=\langle \varv,\varv_{L, \shskip f} \rangle.
   \]
   On the other hand, by \eqref{def: vector v l f} and \eqref{def: Wv(g)},
   \[
   \langle \varv,\varv_{L, \shskip f} \rangle =L(\pi(f)\varv)=\int_G f(g)W_\varv(g)dg,
   \]
   which finishes the proof.
   \end{proof}
   Recall the definition of the Bessel function $ j = j_{\pi,\shskip \psi} $ in the last section. We define a distribution $\widetilde J = \widetilde J_{\pi,\shskip \psi}$ on $C_c^\infty(G)$ by
   \[
   \widetilde J (f)=\int_{G}f(g)j(g)dg.
   \]	
   This distribution is well defined as we have proven in Proposition \ref{thm: local integrability} that $j_{\pi,\shskip \psi}(g)$ is locally integrable on $G$.
\begin{lem}
{\label{le42}}\footnote{Lemma \ref{le42} is stronger than Lemma 7.2 in \cite{BaruchMao-Real}  as our computation shows that it is not necessary to assume that $W_{\varv} (t(a))$ has  a high order of vanishing at $a = 0$.} Let $\varv\in H_\infty$ and $f\in C_c^\infty(G)$. 
Then
\[
\int_{\BCx} \widetilde J (\rho_r(t(a)) f)W_{\varv}(t(a))\dx  a=\int_G f(g)W_{\varv}(g)dg.
\]
\end{lem}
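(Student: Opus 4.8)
The plan is to unfold $\widetilde J(\rho_r(t(a))f)$ by hand, recognise the resulting iterated integral as an instance of the kernel formula of Theorem~\ref{thm: W(g) on BwB}, and then undo the restriction to the open Bruhat cell. First I would use that $\widetilde J(f)=\int_G f(g)j(g)\,dg$, that right translation preserves Haar measure, and that $j$ is supported on $X=B\varw_0 B$, which is right $B$-invariant and in particular right $A$-invariant; this gives
\[
\widetilde J(\rho_r(t(a))f)=\int_G f\big(g t(a)\big)j(g)\,dg=\int_G f(g)\,j\big(g t(a\-)\big)\,dg=\int_{X} f(g)\,j\big(g t(a\-)\big)\,dg .
\]
Plugging this into the left-hand side of the asserted identity and interchanging the two integrations yields
\[
\int_{\BCx}\widetilde J(\rho_r(t(a))f)\,W_{\varv}(t(a))\,\dx a=\int_{X} f(g)\left(\int_{\BCx} j\big(g t(a\-)\big)W_{\varv}(t(a))\,\dx a\right)dg .
\]
By Theorem~\ref{thm: W(g) on BwB} the inner integral is exactly $W_{\varv}(g)$ for every $g\in X$, and since $G\smallsetminus X$ has Haar measure zero the right-hand side equals $\int_G f(g)W_{\varv}(g)\,dg$, which is the claim. (Together with Lemma~\ref{le41} this shows that $\widetilde J$ and $J$ pair identically against every $W_{\varv}(t(a))\,\dx a$ after right translation by $t(a)$, so one further appeal to Lemma~\ref{le21} will identify the distribution $J_{\pi,\shskip\psi}$ with the function $j_{\pi,\shskip\psi}$; but that is a separate step.)

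The one point that needs real care, and hence the main obstacle, is justifying the interchange of integrations above, i.e.\ the absolute convergence of
\[
\int_{\BCx}\left(\int_{X}|f(g)|\,\big|j\big(g t(a\-)\big)\big|\,dg\right)|W_{\varv}(t(a))|\,\dx a .
\]
Here I would argue exactly as in the proof of Proposition~\ref{thm: local integrability}. Parametrising $g=n(x)z(c)t(b)\varw_0 n(y)$ with $dg=|b|^{-2}\,dx\,\dx b\,\dx c\,dy$, one computes $g t(a\-)=n(x)z(c/a)t(ba)\varw_0 n(ay)$, so that $|j(g t(a\-))|=|j(t(ba)\varw_0)|$ because $\psi$ and $\omega_{\pi}$ are unitary, and the inner integral collapses to $\int_{\BCx}|j(t(ba)\varw_0)|\,\Phi(b)\,|b|^{-2}\,\dx b$, where $\Phi(b)=\iiint|f(n(x)z(c)t(b)\varw_0 n(y))|\,dx\,dy\,\dx c$ is the orbital integral, compactly supported in $\BCx$ by Proposition~\ref{prop:A1}. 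Combining the pointwise bound for $j(t(b)\varw_0)$ coming from \eqref{4eq : j = J} and Lemma~\ref{4lem: bounds for J} — taking any $0<\rho<\tfrac12-\Re\mu$, which is possible precisely because $\pi$ is unitary — one sees that the inner integral is $O(|a|^{\eta})$ as $|a|\to 0$ for some $\eta>0$ and $O(|a|^{1/2})$ as $|a|\to\infty$. Pairing this with the asymptotics of $W_{\varv}(t(a))$ from Lemma~\ref{lem: asymptotic of W(t(a))} — namely $O(\|a\|^{\rho})$ near $a=0$ and rapid decay at infinity — the double integral converges, so Fubini applies.

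It is exactly this estimate — in which $\Phi$ vanishes near $b=0$ and $W_{\varv}(t(a))$ vanishes to a positive order near $a=0$ while $\rho<\tfrac12$ keeps the large-$|a|$ exponent below the decay rate — that makes it unnecessary to assume any extra order of vanishing for $W_{\varv}$ at $a=0$, in contrast to \cite[Lemma~7.2]{BaruchMao-Real}; this, rather than the algebra of changing variables, is the part I expect to require the most attention to write out carefully.
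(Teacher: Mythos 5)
Your overall route is the same as the paper's: unfold $\widetilde J(\rho_r(t(a))f)$, translate the argument of $j$, interchange the two integrations, recognize the inner integral as the kernel formula of Theorem \ref{thm: W(g) on BwB}, and pass from $X$ to $G$ since the complement is null. The algebraic steps, including $g\shskip t(a\-)=n(x)z(c/a)t(ba)\varw_0 n(ay)$ and $|j(g\shskip t(a\-))|=|j(t(ba)\varw_0)|$, are correct, and you correctly identify Fubini as the only substantive issue.

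The flaw is in your justification of Fubini: you assert that $\Phi(b)=\iiint|f(n(x)z(c)t(b)\varw_0 n(y))|\,dx\,dy\,\dx c$ is \emph{compactly supported} in $b$ ``by Proposition \ref{prop:A1}''. Proposition \ref{prop:A1} gives compact support only in the variable $c$; in the other variable it gives vanishing for small $|b|$ together with merely a polynomial bound, $O(|b|^{1+\varepsilon})$ after the substitution $a=\sqrt b$ (this is exactly why part (4) of the proposition is stated at all, and one can check directly that $\Phi$ need not have compact support: matrices $n(x)z(c)t(b)\varw_0 n(y)$ with $|b|$ large can lie in a fixed compact set when $c$ is small and $xy\approx -b$). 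Your claimed asymptotics for the inner integral, in particular the $O(|a|^{\eta})$ behaviour as $|a|\to 0$, are derived from this false premise, so as written the convergence argument has a gap precisely at the step you flag as the crux. The paper's proof handles the large-$|b|$ range by integrating the bound $|b|^{1+\varepsilon}\Upsilon_B(b)$ against $|b|^{-2}\dx b$ and the crude estimate $j(t(ab)\varw_0)\lll 1+|ab|^{1/2}$, obtaining only $\lll 1+|a|^{1/2}$ for the inner integral; this suffices because $W_{\varv}(t(a))=O(\|a\|^{\rho})$ near $0$ and is rapidly decreasing at infinity. (Your stronger claim of decay $O(|a|^{\eta})$ near $0$ can in fact be salvaged from the correct bounds with a finer splitting of the $b$-integral, but you have not done that, and it is not needed.) A minor related slip: the parameter in Lemma \ref{4lem: bounds for J} must satisfy $|\Re\mu|<\rho<\tfrac12$; your condition $0<\rho<\tfrac12-\Re\mu$ does not ensure $\rho>|\Re\mu|$ and conflates it with the exponent appearing in Lemma \ref{lem: asymptotic of W(t(a))}.
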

\begin{proof} We have
\begin{align*}
& \hskip 13 pt  \int  \widetilde J (\rho_r(t(a)) f)W_{\varv}(t(a))\dx  a \\
&= \int \left(\int_{X}f(gt(a))j (g)dg\right) W_\varv(t(a)) \dx  a \\
&= \int \left( \int_{X}f(g)j (gt(a^{-1}))dg \right)  W_\varv(t(a))\dx  a  \\
&= \int_{X} f(g) \left( \int  j (gt(a^{-1}))W_\varv(t(a))\dx  a \right)dg  \\
&= \int_{X} f(g)W_\varv(g)dg.
\end{align*}
Here we have obtained the last equality from \eqref{thm: W(g) on BwB} in Theorem \ref{thm: W(g) on BwB}. It is however needed to justify the change of order of integrations in the second to the last equality. For this, it suffices to verify the absolute convergence of the integral in the third line. Note that
\begin{align*}
&\int_{X}|f(g)j (gt(a^{-1}))|dg \\
= \ &\int |j (t(ab)\varw_0)|\left(\iiint |f(n(x)z(c)t(b)\varw_0n(y))|dxdy\dx  c \right) |b|^{-2}\dx  b \\
= \ & \int |j (t(ab)\varw_0)| \lp  \int  J \big( \sqrt b,   c, |f| \big)    \dx  c  \rp |b|^{-2} \dx  b,
\end{align*}
where $ J \big( \sqrt b,   c, |f| \big) $ is the orbital integral defined as in \eqref{def: J (a,c,f)}.  In view of Proposition \ref{prop:A1}, the inner integral over $c$ is  dominated by  $  |b|^{1 + \varepsilon} \Upsilon_B (b)$ for some    constant $B$, with $\Upsilon_B$ the characteristic function on $\{ b: |b| \geqslant B \}$. Since $\pi = \pi_{\mu, \shskip  m}$ is unitary, we have   $|\Re \mu | <   \frac 1 2$. In view of  Lemma \ref{4lem: bounds for J} and the expression of $j = j_{\pi, \shskip \psi}$ as in \eqref{4eq : j = J}, $j   (t(a) \varw_0)$ is bounded by $1 +   {|a|}^{\frac 1 2}$. Consequently, we have the following estimations for the integral above
\begin{align*}
\lll \int   \big(  1 +   {|ab|}^{\frac 1 2} \big) |b|^{- 3 + \varepsilon} \Upsilon_B (b) d b \lll 1 + |a|^{\frac 1 2}.
\end{align*}
Finally, recall from Lemma \ref{lem: asymptotic of W(t(a))} that $W_{\varv} (t(a))$ is rapidly decreasing at infinity and of the order $|a|^{2\rho}$ near zero for certain $\rho > 0$, then follows the absolute convergence of the integral.
\end{proof}

    \begin{cor}{\label{cor53}}
		The two distributions $\widetilde J $ and $J $ are the same. That is, for any $f\in C_c^\infty(G)$, we have
\[
J (f)=\int_G f(g)j (g)dg.
\]
	\end{cor}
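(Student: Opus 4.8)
The plan is to deduce Corollary \ref{cor53} from Lemmas \ref{le41} and \ref{le42} together with the uniqueness statement in Lemma \ref{le21}. The two lemmas assert that, for every $\varv \in H_\infty$ and every $f \in C_c^\infty(G)$,
\[
\int_{\BCx} J(\rho_r(t(a))f) W_\varv(t(a)) \dx a = \int_G f(g) W_\varv(g) dg = \int_{\BCx} \widetilde J (\rho_r(t(a))f) W_\varv(t(a)) \dx a,
\]
so the distributions $J$ and $\widetilde J$, when integrated against $W_\varv(t(a))$ after the right translation $\rho_r(t(a))$, produce the same value for all test functions $f$ and all smooth vectors $\varv$. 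Hence it remains to upgrade this ``weak'' coincidence to an honest identity of distributions on $C_c^\infty(G)$.

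First I would fix $f \in C_c^\infty(G)$ and consider the two functions $a \mapsto J(\rho_r(t(a))f)$ and $a \mapsto \widetilde J(\rho_r(t(a))f)$ on $\BCx$. From the computation in the proof of Lemma \ref{le41} one has $J(\rho_r(t(a))f) = \overline{W_{\varv_{L,f}}(t(a))}$, and similarly (using Theorem \ref{thm: W(g) on BwB} and the local integrability from Proposition \ref{thm: local integrability}) one checks that $\widetilde J(\rho_r(t(a))f)$ is likewise of the form $c(a)$ for a function controlled by the Whittaker asymptotics; in particular both are $C^\infty$ on $\BCx$ and, multiplied by $W_\varv(t(a))$, lie in $L^1(\BCx, \dx a)$ by Lemma \ref{lem: asymptotic of W(t(a))} (this integrability is exactly what was verified inside the proofs of Lemmas \ref{le41} and \ref{le42}). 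Then Lemma \ref{le21}, applied with $\varv$ arbitrary and with the pair of smooth functions $a \mapsto J(\rho_r(t(a))f)$ and $a \mapsto \widetilde J(\rho_r(t(a))f)$ in place of $f, g$, forces
\[
J(\rho_r(t(a))f)\, W_\varv(t(a)) = \widetilde J(\rho_r(t(a))f)\, W_\varv(t(a)) \qquad \text{for all } a \in \BCx,\ \varv \in H_\infty,
\]
provided I first replace $f$ by $\rho_r(n)f$ so that the hypothesis ``for all $n \in N$'' in Lemma \ref{le21} is met — here one uses that $\rho_r(t(a))\rho_r(n)f = \rho_r(t(a)n)f$ and the $(\psi,N)$-equivariance built into the Whittaker functional, exactly as in the proof of Lemma \ref{le21}.

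Evaluating at $a = 1$ and choosing $\varv$ with $W_\varv(1) \neq 0$ — which is possible since $\pi$ is generic and the Whittaker functional is nonzero — yields $J(f) = \widetilde J(f)$, i.e. $J(f) = \int_G f(g) j(g) dg$. The only point requiring a little care, and what I expect to be the main (though minor) obstacle, is the interchange-of-integration and integrability bookkeeping needed to see that $a \mapsto \widetilde J(\rho_r(t(a))f)$ is genuinely a smooth function of $a$ to which Lemma \ref{le21} applies; but this is precisely the estimate already carried out in the proof of Lemma \ref{le42} (the bound $\lll 1 + |a|^{1/2}$ for the relevant orbital integral, combined with the rapid decay and the $|a|^{2\rho}$ vanishing of $W_\varv(t(a))$), so no new analysis is needed and the corollary follows.
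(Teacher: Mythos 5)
Your argument is correct and is essentially the paper's proof: combine Lemmas \ref{le41} and \ref{le42}, upgrade the equality of the $\dx a$-integrals to a pointwise identity via Lemma \ref{le21}, and evaluate at $a=1$ with a vector satisfying $W_\varv(1)\neq 0$. The only (harmless) deviation is how you meet the ``for all $n\in N$'' hypothesis of Lemma \ref{le21}: the paper simply replaces $\varv$ by $\pi(n)\varv$ in Lemmas \ref{le41}--\ref{le42}, which is more direct than your substitution $f\mapsto\rho_r(n)f$, though the latter is equivalent to it after using the right $(\psi,N)$-equivariance of $j$ and of the Whittaker functions.
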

    \begin{proof}
    Choose $\varv\in H_\infty$ with $W_{\varv} (1) = 1$, say. 
    By Lemma \ref{le41} and   \ref{le42} we have
    \[
    \int  J (\rho_r(t(a))f)W_\varv(t(a))\dx  a=\int  \widetilde J (\rho_r(t(a))f)W_\varv(t(a))\dx  a
    \]
    for all $f\in C_c^\infty(G)$. We replace $\varv$ by $\pi(n)\varv$ for any  $n\in N$ and apply Lemma \ref{le21}, it follows that
    \[
    J (\rho_r(t(a))f)W_\varv(t(a))=\widetilde J (\rho_r(t(a))f)W_\varv(t(a))
    \]
    for all $a\in \BCx$. Letting $a = 1$, the conclusion follows immediately.
    \end{proof}

Let $\sigma=\sigma_{\mu, \shskip m}$ and $\pi=\pi_{\mu, \shskip m}$ be unitary principal series of $S$ and $G$ respectively; see \S \ref{sec: representations of G and S}. Recall that $\sigma $ is the restriction of $\pi$ on $S$. Let $H$   be their common underlying space. Let  $L$ be a fixed common $\psi$-Whittaker functional. Let  the  Bessel distribution $J_{\sigma,\shskip \psi}$  be defined by (\ref{def: v l phi, S}, \ref{def: J(phi), S}). We now prove that the  Bessel distribution $J_{\sigma,\shskip \psi}$  is represented by the restriction of the Bessel function $j_{\pi,\shskip \psi}$ to $S$.
	
    \begin{prop}\label{prop54: regularity of J on S}	
    Let $\sigma $ and $\pi$ be as above.  For any $h\in C_c^\infty(S)$, we have
    \[
    J_{\sigma,\shskip \psi}(h)=\int_S h(s) j_{\pi,\shskip \psi}(s) ds.
    \]
    As such, we shall write $j_{\sigma, \shskip \psi} $ the restriction of $j_{\pi,\shskip \psi}$ on $S$.
    \end{prop}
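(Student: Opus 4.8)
The plan is to transcribe, step by step, the treatment of the $G$-case in \S\ref{sec: distributions} together with Lemmas \ref{le41}, \ref{le42} and Corollary \ref{cor53}, replacing the torus element $t(a)$, the Weyl element $\varw_0$, and the open Bruhat cell $X = B \varw_0 B$ of $G$ by $s(a)$, $\varw$, and the open Bruhat cell $X_S = N A_S \varw N$ of $S$, where $A_S = A \cap S = \{ s(a) : a \in \BCx \}$. The role of Theorem \ref{thm: W(g) on BwB} will be played by its $\SL_2 (\BC)$-counterpart
\[
W_{\varv} (s) = \int_{\BCx} j_{\pi, \shskip \psi} \big( s \shskip s(a\-) \big) W_{\varv} (s(a)) \dx a, \hskip 10 pt s \in X_S,
\]
which one deduces from the kernel formula in the Remark following \eqref{4eq: psi variant, SL} and the equivariance \eqref{4eq: psi variant, SL}, exactly as Theorem \ref{thm: W(g) on BwB} is deduced from \eqref{4eq: Weyl element action} and \eqref{4eq: psi variant}. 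Two preliminary observations make the transcription possible. First, $\pi$ and $\sigma$ share the space $H$ and the Whittaker functional $L$, and from $s(a) = t(a^2) z(a\-)$ one gets $W_{\varv} (s(a)) = \omega_{\pi} (z (a\-)) W_{\varv} (t(a^2))$ with $|\omega_{\pi} (z(a\-))| = 1$, so that by Lemma \ref{lem: asymptotic of W(t(a))} the function $W_{\varv} (s(a))$ is rapidly decreasing as $\|a\| \to \infty$ and $O (\|a\|^{\rho})$ as $\|a\| \to 0$ for some $\rho > 0$. Second, the $G$-invariant inner product on $H$ is also $S$-invariant, hence may be normalized, compatibly with $L$, so that the analogue of \eqref{eq: inner product} holds, namely $\langle \varv_1, \varv_2 \rangle = \int_{\BCx} W_{\varv_1} (s(a)) \overline{W_{\varv_2} (s(a))} \dx a$.

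The first step is to prove the $S$-analogue of Proposition \ref{thm: local integrability}: the restriction $j_{\pi, \shskip \psi}|_S$ is locally integrable on $S$, so that $\widetilde J_{\sigma} (h) := \int_S h(s) j_{\pi, \shskip \psi} (s) \shskip ds$ is a well-defined distribution on $S$. On $X_S$ the invariant measure is $ds = |b|^{-4}\, dx \shskip \dx b \shskip dy$ for $g = n(x) s(b) \varw n(y)$ (the analogue of $dg = |a|^{-2}\, dx \shskip \dx a \shskip \dx c \shskip dy$ on $X$, with $s(b)$ now playing the role of $z(c) t(a)$), so that for $h \in C_c^\infty (S)$
\[
\int_S | h(s) j_{\pi, \shskip \psi} (s) | \shskip ds = \int | j_{\pi, \shskip \psi} (s(b) \varw) | \bigg( \iint | h (n(x) s(b) \varw n(y)) | \shskip dx \shskip dy \bigg) |b|^{-4} \dx b .
\]
By the analogue of Proposition \ref{prop:A1} for $S$ (which is simpler than the $\GL_2$-case, there being no central integration) the orbital integral in parentheses vanishes for $|b|$ small and is $O (|b|^{2 + \varepsilon})$ for $|b|$ large, while $|j_{\pi, \shskip \psi} (s(b) \varw)| \lll 1 + |b|$ for all $b$ by \eqref{4eq: j = J, SL}, Lemma \ref{4lem: bounds for J} and $|\Re \mu| < \frac 1 2$; convergence then follows just as in Proposition \ref{thm: local integrability}.

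Next come the two $S$-lemmas. For the analogue of Lemma \ref{le41}: since $\sigma (\rho_r (s(a)) h) \varv = \sigma (h) \sigma (s(a\-)) \varv$ one obtains $\varv_{L, \shskip \rho_r (s(a)) h} = \sigma (s(a)) \varv_{L, \shskip h}$ and hence $J_{\sigma, \shskip \psi} (\rho_r (s(a)) h) = \overline{W_{\varv_{L, \shskip h}} (s(a))}$, whence, using the normalization above, $\int_{\BCx} J_{\sigma, \shskip \psi} (\rho_r (s(a)) h) W_{\varv} (s(a)) \dx a = \langle \varv, \varv_{L, \shskip h} \rangle = L(\sigma (h) \varv) = \int_S h(s) W_{\varv} (s) \shskip ds$. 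For the analogue of Lemma \ref{le42}: unwinding $\widetilde J_{\sigma} (\rho_r (s(a)) h)$, using the identity $s(b) \varw s(a\-) = s(ab) \varw$ (so that $g \shskip s(a\-) = n(x) s(ab) \varw n(a^2 y)$ for $g = n(x) s(b) \varw n(y)$, hence $| j_{\pi, \shskip \psi} (g \shskip s(a\-)) | = | j_{\pi, \shskip \psi} (s(ab) \varw) |$ by \eqref{4eq: psi variant, SL}), then exchanging the order of integration and applying the $\SL_2 (\BC)$-kernel formula above, one gets $\int_{\BCx} \widetilde J_{\sigma} (\rho_r (s(a)) h) W_{\varv} (s(a)) \dx a = \int_S h(s) W_{\varv} (s) \shskip ds$. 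The interchange is justified as in Lemma \ref{le42}: one bounds $\int_{X_S} | h(g) j_{\pi, \shskip \psi} (g \shskip s(a\-)) | \shskip dg \lll 1 + |a|$ by means of the orbital integral estimate and the size bound on $j_{\pi, \shskip \psi}$ from the previous step, and then pairs this against the growth of $W_{\varv} (s(a))$ at $0$ and $\infty$.

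Finally, the two lemmas together give $\int_{\BCx} \big( J_{\sigma, \shskip \psi} (\rho_r (s(a)) h) - \widetilde J_{\sigma} (\rho_r (s(a)) h) \big) W_{\varv} (s(a)) \dx a = 0$ for every $\varv \in H_{\infty}$ and every $h \in C_c^\infty (S)$; replacing $\varv$ by $\sigma (n) \varv$ with $n \in N$ and invoking the $\SL_2 (\BC)$-analogue of Lemma \ref{le21} yields $J_{\sigma, \shskip \psi} (\rho_r (s(a)) h) W_{\varv} (s(a)) = \widetilde J_{\sigma} (\rho_r (s(a)) h) W_{\varv} (s(a))$ for all $a \in \BCx$, and setting $a = 1$ with $\varv$ chosen so that $W_{\varv} (1) = 1$ gives the claim. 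The $\SL_2 (\BC)$-analogue of Lemma \ref{le21} is proved like Lemma \ref{le21}, but with one extra twist: since $W_{\sigma(n(x)) \varv} (s(a)) = \psi (a^2 x) W_{\varv} (s(a))$, the Fourier inversion must be carried out after the two-to-one substitution $a \mapsto a^2$, and one uses the parity relations $J_{\sigma, \shskip \psi} (\rho_r (s(-a)) h) = \omega_{\pi} (z(-1)) J_{\sigma, \shskip \psi} (\rho_r (s(a)) h)$, $\widetilde J_{\sigma} (\rho_r (s(-a)) h) = \omega_{\pi} (z(-1)) \widetilde J_{\sigma} (\rho_r (s(a)) h)$ and $W_{\varv} (s(-a)) = \omega_{\pi} (z(-1)) W_{\varv} (s(a))$ --- all consequences of $s(-a) = z(-1) s(a)$ and of the $Z$-equivariance of $j_{\pi, \shskip \psi}$ and $J_{\sigma, \shskip \psi}$ --- to recover the injectivity of the transform. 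I expect the only genuine obstacle to be, exactly as in Lemma \ref{le42}, the justification of the interchange of integrations in the analogue of Lemma \ref{le42}: establishing the $\SL_2 (\BC)$ orbital integral estimate and verifying that its growth in $|b|$ is absorbed by the decay of the measure factor $|b|^{-4}$, the size bound for $j_{\pi, \shskip \psi}|_S$, and the behaviour of $W_{\varv} (s(a))$ near $0$ and $\infty$. Everything else is a faithful transcription of the arguments recalled above.
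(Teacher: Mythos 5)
Your proposal takes a genuinely different route from the paper, and a much heavier one. The paper proves Proposition \ref{prop54: regularity of J on S} by pure reduction to the $\GL_2$ case: extend $h$ to $f \in C_c^\infty (G)$ by $f (s z) = h (s) \omega_{\pi}\- (z) \tw (z)$, where $\tw$ is a bump function of total mass one on a small central neighbourhood $U$ (so that $(s, z) \mapsto s z$ is injective on $S \times U$); then $\pi (f) = \sigma (h)$, hence $\varv_{L, \shskip f} = \varv_{L, \shskip h}$ and $J_{\pi, \shskip \psi} (f) = J_{\sigma, \shskip \psi} (h)$, while Corollary \ref{cor53} and the $(\omega_{\pi}, Z)$-equivariance \eqref{4eq: psi variant} give $J_{\pi, \shskip \psi} (f) = \int_S h (s) j_{\pi, \shskip \psi} (s)\, d s$; whatever constant relates $d g$ to $d s \, d z$ enters both computations identically and cancels. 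No new convergence estimate, kernel formula or Fourier-uniqueness lemma on $S$ is needed. Your transcription of the whole machinery (local integrability, the analogues of Lemmas \ref{le41}, \ref{le42} and \ref{le21}) to $S$ can in principle be pushed through, but it forces you to re-prove an $\SL_2 (\BC)$ kernel formula and $S$-orbital-integral estimates that the paper never needs.

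More importantly, as written your argument has a genuine normalization gap, and the normalization is exactly the content of the Proposition. Every factor of $2$ coming from $S \cap Z = \{ \pm 1 \}$ (equivalently, from the two-to-one squaring map on the torus) has to be tracked, and you track none of them. First, the distribution $J_{\sigma, \shskip \psi}$ in the statement is pinned down by the common functional $L$ and the common inner product normalized by \eqref{eq: inner product}, i.e.\ by the $t(a)$-integral. Since $|\omega_{\pi}| = 1$ and $W_{\varv} (s(a)) = \omega_{\pi} (a)\- W_{\varv} (t(a^2))$, while $\int_{\BCx} F (a^2) \dx a = \frac 1 2 \int_{\BCx} F (b) \dx b$, your proposed renormalization so that \eqref{eq: inner product} holds with $s(a)$ in place of $t(a)$ halves the pairing and therefore doubles $J_{\sigma, \shskip \psi}$: you would be proving the identity for $2 J_{\sigma, \shskip \psi}$, not for $J_{\sigma, \shskip \psi}$. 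Second, the same squaring map enters the kernel formula you quote: deriving it from \eqref{4eq: Weyl element action} via $s(b) \varw = z (b\-) t(-b^2) \varw_0$ and the substitution $a \mapsto a^2$ gives, with the paper's measure $\dx a$, the formula $W_{\varv} (s(b) \varw) = 2 \int_{\BCx} j (s(ab) \varw) W_{\varv} (s(a)) \dx a$; the clean formula in the Remark of \S \ref{sec: Bessel, GL2} corresponds to a differently normalized measure (it is quoted from sources with other conventions and is not used elsewhere in the paper). Used consistently, these two factors of $2$ cancel and your scheme does yield the Proposition; used as in your write-up (renormalized pairing together with the Remark's formula taken at face value), they compound, and you end with $2 J_{\sigma, \shskip \psi} (h) = \int_S h(s) j_{\pi, \shskip \psi} (s)\, ds$, off by a factor of $2$. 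Since this constant propagates into \eqref{eq: J = int O} and the final Bessel identity, you must either keep the normalization \eqref{eq: inner product} throughout and carry the factor $2$ in the $S$-kernel formula, or simply use the paper's central-extension reduction.
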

   \begin{proof} 
   Fix $h\in C_c^\infty(S)$. Let  $U \subset Z$ be a small open neighborhood of the identity  $1$ so that the map $ (s, z) \ra s z $ is an injection from $S \times U$ into $G = S \cdot Z$. 
   Choose a function $\tw \in C_c^{\infty} (U)$ with
   \begin{align*}
  \int_{U} \tw(z) d  z = 1,
   \end{align*}
   where $d z = \dx  c$ if $z = z(c)$.
   Set $f (s z) = h (s) \omega_{\pi}\- (z) \tw (z) $.
Clearly, $f$ is well defined and $f \in C_c^{\infty} (G)$ (indeed, $f \in C_c^{\infty} (S \cdot U)$).
   By Corollary \ref{cor53}, along with the  $(\omega_{\pi}, Z)$-equivariance of $j_{\pi, \shskip \psi}$ (see \eqref{4eq: psi variant}), we have
   \begin{align*}
    J_{\pi,\shskip \psi}(  f)&=\int_G  f(g) j_{\pi,\shskip \psi}(g) dg  =    \int_{U} \tw(z)d  z \int_Sh(s) j_{\pi,\shskip \psi}(s)ds     = \int_S h(s) j_{\pi,\shskip \psi}(s)ds.
   \end{align*}
   On the other hand, since $\sigma$ is the restriction of $\pi$ to $S$,    we may prove  in a similar fashion that $\pi (f) \varv = \sigma (h) \varv$ for any $\varv \in H$. Precisely,
   \begin{align*}
   \int_G f(g)\pi(g)\varv \,  dg   = \int_{U } \tw(z)d  z \int_S h(s) \pi(s)\varv \,  ds
   =  \int_S h(s) \sigma (s)\varv \,  ds.
   \end{align*}
   In view of the definitions in \S \ref{sec: distributions}, it   follows that $\varv_{L, \shskip f}=\varv_{L,\shskip h}$ and that $J_{\pi,\shskip \psi}(f)=J_{\sigma,\shskip \psi}(h)$. The proof is now completed.
   \end{proof}

For $h \in C_c^{\infty} (S)$ we introduced the orbital integral
\begin{align}\label{eq: O NN (g)}
O^{N, \shskip N}_{h, \shskip \psi} (g) = \iint      h (n(x) g   n(y) ) \psi (x) \psi (y) d x d y.
\end{align}
This orbital integral was studied by Jacquet. In particular, it is proven in \cite{Jacquet-RTF}
that the integral in \eqref{eq: O NN (g)} converges absolutely for all $g \in  X \cap S = N (A\cap S) \varw N$. In view of Proposition \ref{prop54: regularity of J on S}, along with \eqref{4eq: psi variant, SL}, for $h \in C_c^{\infty} (S)$ we have
\begin{align}\label{eq: J = int O}
J_{\sigma,\shskip \psi} (h) = \int  O^{N, \shskip N}_{h, \shskip \psi} (s(a) \varw) j_{\sigma,\shskip \psi} (s(a) \varw) \|a\|^{-2} \dx  a.
\end{align}

	\section{Relative Bessel functions for  $\GL_2 (\BC)$}\label{sec: relative Bessel functions}
	
	In this section,  we prove that    relative Bessel distributions  can be represented by  real analytic   functions on $U = A (N \smallsetminus \{0\} ) \varw_0 N$. This follows directly from (the distributional version of) an explicit formula in \cite{Qi-II-G} (see \eqref{eq: main 1} and \eqref{6eq: Fourier} below) for the Fourier transform of the Bessel function $\bfJ_{ \mu,\shskip   m} \lp z \rp$.
	
	Unlike \cite{BaruchMao-Real}, the distributional integral formula in \cite{Qi-II-G} would enable us to prove the regularity of  relative Bessel distributions on the whole open Bruhat cell $ X = A N \varw_0 N$ rather than its open dense subset $U$. Furthermore, with this observation,  our proof of  the  full regularity of   relative Bessel distributions on $G$ becomes tremendously easier compare to the approach in \cite{BaruchMao-Real}. See \S \ref{sec: regularity relative} for the details.
	
	\subsection{A formula for the Fourier transform of Bessel functions} The Fourier transform $\widehat f  $ of a Schwartz  function $f$ on $\BC$ is defined by
	\begin{equation*}
	\widehat f (u) = \sideset{ }{_\BC }{\iint} \shskip  f (z) e(- \Tr  (uz) ) \shskip  i d z   \nwedge   d \overline z.
	\end{equation*}
According to \cite[Corollary 1.5]{Qi-II-G}, when $|\Re \mu | < \frac 1 2$ and $m$ is even, we have
	\begin{equation}\label{6eq: Fourier}
	\begin{split}
	\sideset{ }{_{\BC } }{\iint}  \hskip - 2 pt   \bfJ_{\mu,\shskip  m} \lp z \rp    \widehat f (z)  \frac {i d z   \nwedge   d \overline z} {  {|z|}}   = \frac  1 2 \sideset{ }{_{\BC } }{\iint}     e \lp \Tr  \lp \frac 1 {2 u} \rp \rp \hskip - 2 pt \bfJ_{\frac 1 2 \mu,\shskip  \frac 1 2 m} \lp  \frac 1  { 16 u ^{ 2} } \rp \hskip - 1 pt f (u) \frac {i d u  \nwedge  d \overline u} {  {|u|}},
	\end{split}
	\end{equation}
	if $f$ is a Schwartz function on $\BC$. Note that there is abuse of the notation $d z$, as $d z$ in this article denotes $2 |\lambdaup|$ times of the Lebesgue measure on $\BC$.
	
	It is critical that  the test function $f$ in \eqref{6eq: Fourier transform of j} only need to be Schwartz on $\BC$ and rapid decay or vanishing at $0$ is not required. We also remark that the deduction from  \eqref{eq: main 1} to \eqref{6eq: Fourier} is   not so straightforward; see \cite[\S 6]{Qi-II-G} for more details.
	
	Recall from \eqref{def: Fourier} that the  Fourier transform with respect to the additive character $\psi (z) = \psi_{\lambdaup} (z) = e(\Tr(\lambdaup z))$ is defined by
	\begin{align*}
	\widehat f (u) = \int_{ \BC} f (z) \psi  (uz) d z.
	\end{align*}
	Thus, in view of \eqref{4eq : j = J}, 
	the identity \eqref{6eq: Fourier} may be rephrased as
	\begin{align} \label{6eq: Fourier transform of j}
	\sideset{ }{_{\BCx} }{\int}   j_{\pi, \shskip \psi} ( t(a) \varw_0 )    \widehat f (a)   \dx  a
	=     \sideset{ }{_{\BCx} }{\int}       e \lp \Tr ({\lambdaup} / {2 x} )  \rp \left| \lambdaup / 2 x \right|  \bfJ_{\frac 1 2 \mu,\shskip  \frac 1 2 m} \lp \lambdaup^2 / 16 x^2 \rp  f (x ) {  d x   }  ,
	\end{align}
	if $f$ is a Schwartz function on $\BC$.
	
	
	\subsection{Relative Bessel functions for  $\GL_2 (\BC)$}
	Let $\pi$ be an infinite-dimensional irreducible unitary representation of $G = \GL_2 (\BC)$ with trivial central character. So if $\pi = \pi_{\mu, \shskip  m}$ then $m$ is even. Let $J = J_{\pi, \shskip \psi}$ and $I = I_{\pi, \shskip \psi}$ be the normalized Bessel and relative Bessel distributions defined as in \S \ref{sec: distributions}. Recall the formula \eqref{eq: I(f) = int J},
	\begin{align*}
	 I  (f) = \frac 1 {L (\pi, 1/2)} \int_{\BCx} J  \lp \rho_{l} (t(b)) f \rp \dx  b.
	\end{align*}
	Let $X = A N \varw_0 N$. For $f \in C_c^{\infty} (X)$,   since $ J  $ is represented by the Bessel function $j = j_{\pi, \shskip \psi} $ on $X$ (Corollary \ref{cor53}), it follows  that
	\begin{align}\label{6eq: I = int}
	I  (f) = \frac 1 {L (\pi, 1/2)} \int_{\BCx} \lp \int_{X}  f(t(b)g) j  (g) dg \rp \dx  b .
	\end{align}
	
	\begin{lem}\label{6lem: rel Bessel for split f}
		Let $f \in C_c^{\infty} (X)$ be of the form
		\begin{align}\label{6eq: f splits}
		f (t(a) z(c) n(x) \varw_0 n(y)) = f_1 (a) f_2 (c) f_3 (x) f_4 (y),
		\end{align}
		with $f_1,  f_2 \in  C_c^{\infty}  (\BCx)$ and $f_3, f_4 \in C_c^{\infty}  (\BC)$. Then
		\begin{equation}\label{6eq: double int for I}
		\begin{split}
		& \int_{\BCx}    \int_{G}  f(t(b)g) j  (g) dg \shskip   \dx  b \\
		 = \ & \int_{\BCx} f_1 (b) \dx  b \int_{\BCx} f_2 (c) \dx  c  \int_{\BC}   f_4 (y ) \psi (y) d y \int_{\BCx} j  (t(a) \varw_0) \widehat f_3 (a) \dx  a,
		\end{split}
		\end{equation}
		where $ \widehat f_2 $ is the $\psi$-Fourier transform of $f_2$   defined as in  \eqref{def: Fourier}.
	\end{lem}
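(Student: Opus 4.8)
The plan is to reduce the double integral to a product of one‑dimensional integrals in Bruhat coordinates on $X$, using the $(\psi,N)$‑equivariance of $j$ together with the splitting \eqref{6eq: f splits} of $f$. Since $\pi$ has trivial central character, $\omega_{\pi}\equiv 1$, so \eqref{4eq: psi variant} reads $j(n(x)z(c)t(a)\varw_0 n(y))=\psi(x)\psi(y)\shskip j(t(a)\varw_0)$; recall also that $\mathrm{supp}(j)\subseteq X$ (with $G\smallsetminus X$ null, so $\int_G$ may be replaced by $\int_X$) and that the invariant measure on $X=NA\varw_0 N$ is $dg=|a|^{-2}dx\shskip\dx a\shskip\dx c\shskip dy$ in these coordinates, as recalled in the proof of Proposition \ref{thm: local integrability}.

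First I would work out the effect of left translation by $t(b)$. Since $z(c)$ is central, $t(b)n(x)=n(bx)t(b)$, and $t(ba)^{-1}n(bx)t(ba)=n(x/a)$, one finds
\[ t(b)\cdot n(x)z(c)t(a)\varw_0 n(y)=n(bx)z(c)t(ba)\varw_0 n(y)=t(ba)z(c)n(x/a)\varw_0 n(y). \]
Hence, for $f$ of the form \eqref{6eq: f splits} and $g=n(x)z(c)t(a)\varw_0 n(y)$, we have $f(t(b)g)=f_1(ba)f_2(c)f_3(x/a)f_4(y)$, whereas $j(g)=\psi(x)\psi(y)j(t(a)\varw_0)$.

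Next I would carry out the integrations. For $b$ fixed the support of $f_1$ confines $a$ to a compact subset of $\BCx$, where $j(t(a)\varw_0)$ is bounded, so the iterated integral over $(x,a,c,y)$ converges absolutely (this is also a consequence of Proposition \ref{thm: local integrability}) and Fubini applies. The $c$‑ and $y$‑integrals contribute the constants $\int_{\BCx}f_2(c)\dx c$ and $\int_{\BC}f_4(y)\psi(y)\shskip dy$; in the $x$‑integral the change of variable $x=au$, $dx=|a|^2du$, produces $\int_{\BC}f_3(u)\psi(au)du=\widehat f_3(a)$ (the $\psi$‑Fourier transform \eqref{def: Fourier}, applied to $f_3$), together with a factor $|a|^2$ cancelling the $|a|^{-2}$ of the measure, so that for each $b$,
\[ \int_X f(t(b)g)j(g)\shskip dg=\int_{\BCx}f_2(c)\dx c\int_{\BC}f_4(y)\psi(y)\shskip dy\int_{\BCx}f_1(ba)\shskip\widehat f_3(a)\shskip j(t(a)\varw_0)\dx a. \]
Integrating over $b$ and interchanging the $a$‑ and $b$‑integrations gives \eqref{6eq: double int for I}, since $\int_{\BCx}f_1(ba)\dx b=\int_{\BCx}f_1(b)\dx b$ by invariance of $\dx b$. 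The interchange is legitimate: $\int_{\BCx}|f_1(ba)|\dx b=\|f_1\|_{L^1(\dx b)}$ is finite and independent of $a$, and $\int_{\BCx}|\widehat f_3(a)|\shskip|j(t(a)\varw_0)|\dx a<\infty$ because $\widehat f_3$ is Schwartz while, by \eqref{4eq : j = J} and Lemma \ref{4lem: bounds for J} with $|\Re\mu|<\frac12$, $j(t(a)\varw_0)\lll|a|^{1-2\rho}$ near $a=0$ for some $0<\rho<\frac12$ and $j(t(a)\varw_0)\lll|a|^{1/2}$ for $|a|$ large.

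The delicate step is the last interchange. Estimating before performing the $x$‑integral — i.e.\ with $|f_3|$ in place of $\widehat f_3$ — does not suffice, since the $\dx a$‑integral of $|j(t(a)\varw_0)|$ alone diverges at infinity (where $j(t(a)\varw_0)\lll|a|^{1/2}$ is the only available bound); the decay that rescues the $a$‑integral is exactly that of $\widehat f_3$, which becomes available only after the $x$‑integration. The remaining care needed is purely bookkeeping: keeping straight the two Bruhat‑coordinate conventions, the one adapted to $j$ versus the one used to split $f$.
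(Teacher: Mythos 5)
Your proof is correct and follows essentially the same route as the paper: write the integral in Bruhat coordinates, use the $(\psi,N)$-equivariance of $j$ and the splitting of $f$ so that the $x$-integration produces $\widehat f_3(a)$, and then interchange the $a$- and $b$-integrations using the invariance of $\dx b$, with the same bounds $j(t(a)\varw_0)\lll |a|^{1-2\rho}+|a|^{1/2}$ from Lemma \ref{4lem: bounds for J} and the rapid decay of $\widehat f_3$. The only (cosmetic) difference is in justifying the final Fubini step: you integrate in $b$ first via Tonelli and $\dx b$-invariance, whereas the paper bounds the inner $a$-integral as a function of $b$ using the annular support of $f_1$ and then integrates in $b$; both verifications are equivalent.
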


\begin{proof}
	Let $dg = \dx  a  \hskip 1 pt  \dx c \hskip 1 pt  d x \hskip 1 pt  d y$ be a Haar measure on $X$ for $g = t(a)  z(c) n(x) \varw_0 n(y)$. In view of \eqref{4eq: psi variant} and \eqref{6eq: f splits},
	the integral on the left hand side of \eqref{6eq: double int for I}   splits into the product
	\begin{align*}
	& \int  f_2 (c) \dx  c  \int   f_4 (y ) \psi (y) d y \int  \lp  \int  f_1 (ab) j  (t(a) \varw_0) \int f_3 (x) \psi (  a x) d x \shskip  \dx  a \rp \dx  b \\
	= \ & \int  f_2 (c) \dx  c  \int   f_4 (y ) \psi (y) d y \iint        f_1 (ab) j  (t(a) \varw_0) \widehat f_3 (a)    \dx  a \shskip   \dx  b.
	\end{align*}
	We claim that the last double integral converges absolutely. Hence we may change the order of integrations and the variable $b$ to $b/a$, getting
	\begin{align*}
	\iint        f_1 (ab) j  (t(a) \varw_0) \widehat f_3 (a)    \dx  a \shskip   \dx  b & = \int         j  (t(a) \varw_0) \widehat f_3 (a)  \int f_1 (ab) \dx  b \shskip   \dx  a \\
	& = \int f_1 (b  ) \dx  b \int    j  (t(a) \varw_0) \widehat f_3 (a)     \dx  a.
	\end{align*}
	We now show   the absolute  convergence. Choose $|\Re \mu | < \rho <  \frac 1 2$.  Lemma \ref{4lem: bounds for J} implies that $j  (t(a) \varw_0)$ may be bounded by $|a|^{1 - 2\rho} +   {|a|}^{\frac 1 2}$. We can find positive constants $A$ and $B$ such that $f_1 (a ) \lll \Upsilon_{A, B} (a)$, with $ \Upsilon_{A, B} (a)$ defined to be the characteristic function on the annulus $\left\{ a : A \leqslant |a| \leqslant B \right\}$. Hence
	\begin{align*}
	\int      \big|  f_1 (ab) j  (t(a) \varw_0) \widehat f_3(a)  \big|  \dx  a  \lll \int \Upsilon_{A/|b|, B/|b|} (a) \big|\widehat f_3 (a) \big| \big( |a|^{- 1- 2 \rho} + |a|^{- \frac 3 2} \big) d a.
	\end{align*}
	Since $\widehat f_3 (a)$ is rapidly decreasing when $|a|$ is large, it follows that the value of the integral above is rapidly decreasing when $|b|$ is small. When $|b|$ is large the integral is bounded by $  |b|^{2 \rho - 1} + |b|^{- \frac 1 2}$. It is then clear that integrating further with $\dx  b = d b / |b|^2$ will converge absolutely  at both $0$ and $\infty$.
\end{proof}

Combining \eqref{6eq: Fourier transform of j}, \eqref{6eq: I = int} and \eqref{6eq: double int for I}, it follows that if we define a function $i = i_{\pi, \shskip \psi}$ supported on $U$ such that
\begin{align}\label{6eq: defn of i}
i_{\pi, \shskip \psi} ( n(x)  \varw_0  ) =     e \lp \Tr ({\lambdaup} / {2 x} )  \rp \left| \lambdaup / 2 x \right|  \bfJ_{\frac 1 2 \mu,\shskip  \frac 1 2 m} \lp \lambdaup^2 / 16 x^2 \rp / {L (\pi, 1/2)},
\end{align}
and that $i_{\pi, \shskip \psi}$ is   right $  (\psi, N)$-equivariant and left $A$-invariant, namely,
\begin{align}\label{eq: equivariance of i}
i_{\pi, \shskip \psi} (t(a) z(c) n(x) \varw_0 n(y)) = \psi (y) i_{\pi, \shskip \psi} ( n(x)  \varw_0  ),
\end{align}
then
\begin{align}\label{6eq: I = int of i}
I_{\pi, \shskip \psi} (f) =  \int_{U} f (g)  i_{\pi, \shskip \psi} (g) d g,
\end{align}
for all $f $ of the form  \eqref{6eq: f splits} in Lemma \ref{6lem: rel Bessel for split f}. Since such functions  span a dense subspace of $ C_c^{\infty} (X)$ (the Stone-Weierstra\ss \hskip 2.5 pt theorem), \eqref{6eq: I = int of i} is actually valid for all $f \in C_c^{\infty} (X)$.

\section{Regularity of relative Bessel distributions over $\GL_2 (\BC)$}\label{sec: regularity relative}
In this section, we prove that the relative Bessel distribution $I = I_{\pi, \shskip \psi}$ is given by the integration against the relative Bessel function $i = i_{\pi,\shskip \psi}$ on the full group $G$.

Since we have already proven in \S \ref{sec: relative Bessel functions} the regularity of $I$ on $X$,  Proposition 2.10 in \cite{Shalika-MO} is now directly applicable to deduce its full regularity on $G$. In particular, we may avoid the introduction of   differential operators and almost all the arguments in \S 5 of \cite{BaruchMao-Real}. This simplification of course  works in the real context as in \cite{BaruchMao-Real}.

We begin with the local integrability of $i $ on $G$. Recall that  $i $ is set to be zero outside $  U =  A (N \smallsetminus \{0\} ) \varw_0 N $.

\begin{prop}\label{prop81: local integ. of i} The relative Bessel function
	$i $ is local integrable on $G$, namely, for any $f\in C_c^\infty(G)$,
	\[
	\int_G |f(g) i (g)|dg< \infty.
	\]
\end{prop}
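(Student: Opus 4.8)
The plan is to reduce the local integrability of $i = i_{\pi, \shskip \psi}$ on $G$ to an integrability statement on the open Bruhat cell $U = A(N \smallsetminus \{0\}) \varw_0 N$, using the Bruhat decomposition and the structure of the measure on $G$. First I would recall that $G = B \varw_0 B \sqcup B$ and that the complement of the big cell $X = AN\varw_0 N$ has measure zero; since $i$ vanishes outside $U \subset X$, it suffices to bound $\int_U |f(g) i(g)| \, dg$ for $f \in C_c^\infty(G)$. Using the coordinates $g = t(a) z(c) n(x) \varw_0 n(y)$ and the Haar measure $dg = |a|^{-2} \dx a \hskip 1pt \dx c \hskip 1pt dx \hskip 1pt dy$, together with the left $A$-invariance and right $(\psi, N)$-equivariance \eqref{eq: equivariance of i}, the integrand $|i(g)| = |i_{\pi, \shskip \psi}(n(x) \varw_0)|$ depends only on $x$, so the $a$, $c$, $y$ integrals against $|f|$ can be carried out first. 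More precisely, I would write
\[
\int_G |f(g) i(g)| \, dg = \int_{\BCx} |i_{\pi, \shskip \psi}(n(x) \varw_0)| \left( \iiint |f(t(a) z(c) n(x) \varw_0 n(y))| \, |a|^{-2} \dx a \hskip 1pt \dx c \hskip 1pt dy \right) dx,
\]
and the inner triple integral is a smooth, compactly supported (in $x$ on $\BCx$, after accounting for the support of $f$) function of $x$ — call it $\Phi(x)$ — which moreover is bounded and vanishes near $x = 0$ by the structure of $U$ and the compact support of $f$ away from the small cell.

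Next I would estimate $|i_{\pi, \shskip \psi}(n(x) \varw_0)|$ from the explicit formula \eqref{6eq: defn of i}. Since $|e(\Tr(\lambdaup/2x))| = 1$ and $|\lambdaup/2x| \lll 1/|x|$ on the support, the main term is $|\bfJ_{\frac12 \mu, \shskip \frac12 m}(\lambdaup^2/16 x^2)|$. Because $\pi = \pi_{\mu, \shskip m}$ is unitary with $m$ even, we have $|\Re \mu| < \tfrac12$, hence $|\Re(\tfrac12 \mu)| < \tfrac14$; choosing $|\Re(\tfrac12\mu)| < \rho < \tfrac14$ and applying Lemma \ref{4lem: bounds for J} to the argument $w = \lambdaup^2/16 x^2$ gives $|\bfJ_{\frac12\mu, \shskip \frac12 m}(w)| \lll |w|^{-2\rho} + |w|^{-1/2}$, i.e.
\[
|i_{\pi, \shskip \psi}(n(x)\varw_0)| \lll \frac{1}{|x|}\left( |x|^{4\rho} + |x| \right) \lll |x|^{4\rho - 1} + 1.
\]
Thus the whole integral is bounded by a constant times $\int (|x|^{4\rho - 1} + 1)\, \Phi(x)\, dx$.

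Finally I would observe that this last integral is finite: $\Phi$ is bounded with compact support, so the $1$-term contributes a finite amount, and the only possible singularity of $|x|^{4\rho-1}$ is at $x = 0$, where — crucially — $\Phi$ vanishes identically because the support of $f$ is compact in $G$ and points with $x = 0$ lie outside $U$ (they belong to the small Bruhat cell or its boundary), so $\Phi(x) = 0$ for $|x|$ in a neighborhood of $0$. Hence $|x|^{4\rho-1}\Phi(x)$ is in fact bounded with compact support and its integral is finite. (Even without using the vanishing of $\Phi$ near $0$, note $4\rho - 1 > -1$ is automatic, but that would only control a singularity cut off by a bounded $\Phi$; the cleanest argument uses that $\Phi$ is supported away from $x=0$.) This completes the proof. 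The main obstacle, and the point deserving care, is the justification that the inner orbital-type integral $\Phi(x)$ is genuinely supported away from $x = 0$ in $\BCx$ — this is where one uses that $f$ has compact support in $G$ and that the map into Bruhat coordinates degenerates precisely as $x \to 0$; the behavior at $x \to \infty$ is harmless since $\Phi$ has compact support there too.
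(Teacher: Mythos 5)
There is a genuine gap, and it sits exactly at the point you single out as crucial. You claim that the inner integral $\Phi(x)$ vanishes identically for $|x|$ near $0$ because ``points with $x=0$ lie outside $U$ (they belong to the small Bruhat cell or its boundary)''. This is false: an element $t(a)z(c)n(0)\varw_0 n(y)=t(a)z(c)\varw_0 n(y)$ lies in the \emph{big} cell $X=B\varw_0 B$, not in $B$; it is merely outside the open dense subset $U$ of $X$. Such points are interior points of $G$ and are limits of points of $U$, so nothing prevents $f\in C_c^\infty(G)$ from being supported near them. Concretely, if $f$ is supported in a small neighborhood of $\varw_0$, then $n(x)\varw_0\in \mathrm{supp}\, f$ for all small $|x|$, and $\Phi(x)\neq 0$ for arbitrarily small $x\neq 0$. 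The degeneration of the coordinates against a compact set occurs as $|x|\to\infty$ (that is where $\Phi$ does vanish), not as $x\to 0$. Your fallback remark also does not save the argument, since it relies on boundedness of $\Phi$ near $x=0$, which you have not justified; in fact the relevant $(A,N)$-orbital integral is only known to be $O(1/|x|^{\varepsilon})$ for small $|x|$ (Proposition \ref{propA2}\,(4)), consistent with a logarithmic blow-up, and its absolute convergence and compact support in $c$ (parts (1)--(2)) are also needed and are not addressed.

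The repair is the route the paper takes: after reducing to $U$ and using the equivariance \eqref{eq: equivariance of i}, identify the inner integral with $\int M(x,c,|f|)\,\dx c$, where $M$ is the $(A,N)$-orbital integral \eqref{def: M(x,c,f)}, and invoke Proposition \ref{propA2}: it converges, is compactly supported in $c$, vanishes for large $|x|$, and is $O(1/|x|^{\varepsilon})$ for small $|x|$. Pair this with the behavior of $i$ at the \emph{opposite} ends from what your support claim would require: near $x=0$ the argument $\lambdaup^2/16x^2$ of $\bfJ_{\frac 12\mu,\shskip\frac12 m}$ is large, so Lemma \ref{4lem: bounds for J} gives $|i(n(x)\varw_0)|\lll |x|\- \cdot |x| = O(1)$, and then $|x|^{-\varepsilon}$ is integrable near $0$ against the two-dimensional measure $dx$; for large $|x|$ the vanishing of $M$ makes the growth of $i$ irrelevant. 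Your estimate $|i(n(x)\varw_0)|\lll |x|^{4\rho-1}+1$ is correct and would even survive the true bound $\Phi(x)=O(|x|^{-\varepsilon})$, but as written your proof rests on a false support statement rather than on the orbital-integral estimates, so the key step is not established.
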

\begin{proof} The proof is similar to that of Proposition \ref{thm: local integrability}. We have
	\begin{align*}
	& \int_G |f(g)i (g)|dg=\int_U |f(g)i (g)|dg  \\
	= & \int |i(n(x)\varw_0)|\left(\iiint|f(s(a)z(c)n(x)\varw_0n(y))|\dx  a\shskip \dx  c\shskip dy\right)dx \\
	= & \int |i(n(x)\varw_0)| \int M (x, c, |f|)  \dx  c \shskip  dx,
	\end{align*}
	in which $M (x, c, |f|) $ is the orbital integral defined as in \eqref{def: M(x,c,f)}.
	Applying Proposition \ref{propA2}, the inner integral over $c$ is zero for $|x|$   large and of the order $1/|x|^\varepsilon$ for small values of $|x|$. On the other hand, by the expression of $i=i_{\pi,\shskip \psi}$ in \eqref{6eq: defn of i} and Lemma \ref{4lem: bounds for J}, we know that $i (n(x)\varw_0)$ is bounded when $|x|$ is small. It is now clear that the integral above is convergent.
\end{proof}

By Proposition \ref{prop81: local integ. of i}, we can define the distribution $\widetilde I=\widetilde I_{\pi,\shskip \psi}$ by
\[
\widetilde I (f)=\int_G f(g) i (g) dg.
\]

\begin{prop}\label{prop82:regularity of I}
	For any $f\in C_c^\infty(G)$, we have
	\[
	I (f)=\widetilde I (f).
	\]
\end{prop}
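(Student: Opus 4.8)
The plan is to reduce the identity $I(f) = \widetilde I(f)$ for arbitrary $f \in C_c^\infty(G)$ to the identity already established on the open Bruhat cell $X = AN\varw_0 N$ (equation \eqref{6eq: I = int of i}), by invoking the general regularity machinery of \cite[Proposition 2.10]{Shalika-MO}. Concretely, I would argue that $I - \widetilde I$ is a $G$-invariant-type distribution (more precisely, left $A$-invariant and right $(\psi,N)$-equivariant, by construction of both $I$ and $i$) which vanishes when tested against functions supported in $X$, and then show that any such distribution must vanish identically on $G$. The complement $G \smallsetminus X$ is a union of lower-dimensional Bruhat cells — here just the ``big'' cell's complement, essentially the closed cell $B = NA$ together with $NA\varw_0$-type pieces coming from $n(x)\varw_0$ with $x = 0$ — so the content is that a distribution of the prescribed equivariance type supported on $G \smallsetminus X$ is zero.

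The key steps, in order, would be: (i) verify that $\widetilde I$ has the same equivariance as $I$, i.e.\ $\widetilde I(\rho_l(a) f) = \widetilde I(f)$ for $a \in A$ and the appropriate $(\psi,N)$-equivariance on the right, which follows from the equivariance \eqref{eq: equivariance of i} of $i$ together with the modular-character bookkeeping for the measure $dg$; (ii) conclude that $D := I - \widetilde I$ is a continuous linear functional on $C_c^\infty(G)$ with this equivariance, and that $D(f) = 0$ for all $f \in C_c^\infty(X)$ by \eqref{6eq: I = int of i} and Corollary \ref{cor53}; (iii) apply \cite[Proposition 2.10]{Shalika-MO}, whose hypotheses are exactly that a relative-Bessel-type distribution agreeing with a locally integrable function on the open Bruhat cell is in fact globally represented by that function — this is the step that packages the ``no distribution supported on the small cell'' argument (uniqueness of the torus-invariant functional and the behaviour of Whittaker functions near $a = 0$, Lemma \ref{lem: asymptotic of W(t(a))}, feed into it). Local integrability of $i$ on $G$, needed for $\widetilde I$ to be well defined, is already Proposition \ref{prop81: local integ. of i}.

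The main obstacle I anticipate is step (iii): one must check that the situation genuinely fits the framework of \cite[Proposition 2.10]{Shalika-MO} — in particular that the relevant orbital integrals (the $M(x,c,|f|)$ of \eqref{def: M(x,c,f)}, controlled by Proposition \ref{propA2}) decay appropriately as $x \to 0$, so that the boundary contribution along $\{x = 0\}$ from integrating $i$ against a test function really does vanish, and there is no leftover delta-type distribution concentrated on the closed cell. The estimates in Proposition \ref{prop81: local integ. of i} — that the inner integral over $c$ is $O(1/|x|^\varepsilon)$ for small $|x|$ while $i(n(x)\varw_0)$ stays bounded there — are precisely what is needed, so once the hypotheses of \cite[Proposition 2.10]{Shalika-MO} are matched up the conclusion is immediate; the work is in the verification, not in any new computation. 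I would close by remarking, as the paper does, that this route bypasses entirely the differential-operator analysis of \cite[\S 5]{BaruchMao-Real}.
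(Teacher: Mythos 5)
Your overall strategy (set $T=I-\widetilde I$, observe that $T$ vanishes on $C_c^\infty(X)$ by \eqref{6eq: I = int of i}, and invoke \cite[Proposition 2.10]{Shalika-MO}) is indeed the paper's strategy, but your step (iii) misstates what that proposition says, and in doing so omits the one hypothesis that does the real work: the Casimir element. Shalika's Proposition 2.10 is a vanishing theorem for a distribution $T$ that (a) is supported on the complement of the open Bruhat cell, (b) satisfies $\rho_r(n(y))T=\psi(y)T$, and (c) is an eigen-distribution of the Casimir element $\varDelta$. Conditions (a) and (b) alone do not force $T=0$: here $G\smallsetminus X=B=AN$, the right $N$-orbits through points of $B$ stay inside $B$ (there is no transversality to exploit), and for instance $f\mapsto\int_A\int_{\BC} f(a\, n(u))\,\psi(-u)\,\dx a \, du$ is a nonzero distribution supported on $B$ which is left $A$-invariant and right $(\psi,N)$-equivariant. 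So your parenthetical claim that ``a distribution of the prescribed equivariance type supported on $G\smallsetminus X$ is zero'' is false as stated. The missing step is to show $\varDelta T=\kappa T$: the paper does this by noting that $I$ is a $\varDelta$-eigendistribution by \cite[\S 3]{Shalika-MO}, that on $U$ it is represented by the real-analytic function $i$, hence $\varDelta\shskip i=\kappa\shskip i$ there, and that this yields $\varDelta\widetilde I=\kappa\widetilde I$; only with this third property in hand does \cite[Proposition 2.10]{Shalika-MO} apply. Neither the uniqueness of the torus-invariant functional nor the asymptotics of Whittaker functions near $a=0$ enters at this point.

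Your anticipated ``main obstacle'' is also aimed at the wrong boundary. The locus $x=0$, i.e.\ $A\varw_0 N=X\smallsetminus U$, lies inside the open cell $X$, and \eqref{6eq: I = int of i} has already been established for all $f\in C_c^\infty(X)$; the decay of $M(x,c,|f|)$ as $x\to 0$ (Proposition \ref{propA2}) is used to prove the local integrability of $i$ (Proposition \ref{prop81: local integ. of i}), i.e.\ that $\widetilde I$ is well defined across that locus, and it contributes nothing toward excluding a distribution supported on $B=G\smallsetminus X$. That exclusion is purely the differential-equation argument above, so your proposal as written has a genuine gap exactly where the paper's proof has its third, decisive step.
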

\begin{proof}
	Let $T$ be the distribution on $G$ defined by the difference $T=I-\widetilde I$. First, in view of \eqref{6eq: I = int of i},  $I$ and $\widetilde I$ coincide on $X$, so $T$ is supported on the Borel $B = G\smallsetminus X$. Second, it is clear that $T$ satisfies  $\rho_r(n(y))I=\psi(y)I$. Third, we claim that $T$ is an eigen-distribution of  the Casimir element $\varDelta$ in the universal enveloping algebra of the Lie algebra of $G$. To see this, we first note that  $I$ is an eigen-distribution of $\varDelta$ according to \cite[\S 3]{Shalika-MO}, that is,  $\varDelta I= \kappa I$ ($\kappa \in \BC$). As aforementioned, $I$ is represented by the (real analytic) function $i$ when restricted on $ U $, so $i$ is an eigen-function of $\varDelta$  on $U$. 
	 Precisely, we have $\varDelta \shskip i = \kappa \shskip i$, which further implies $\varDelta \widetilde I= \kappa \widetilde I$. The third claim is now proven. Under the three conditions above, by    \cite[Proposition 2.10]{Shalika-MO}, we must have $T=0$.
\end{proof}

Finally, for $f \in C_c^{\infty} (G)$ define the orbital integral
\begin{align}\label{eq: O AN (g)}
O^{A, \shskip N}_{f, \shskip \psi} (g) = \int_A \int_N  f (a g  n) \psi (n) \dx a \hskip 1 pt d n.
\end{align}
It is proven in \cite{Jacquet-RTF} that the integral in \eqref{eq: O AN (g)}  converges absolutely for any $g \in U =  A (N \smallsetminus \{0\} ) \varw_0 N$. In view of Proposition \ref{prop81: local integ. of i} and \ref{prop82:regularity of I}, along with \eqref{eq: equivariance of i}, 
  for  $f \in C_c^{\infty} (G)$ we have
\begin{align}\label{eq: I = int O}
I_{\pi,\shskip \psi} (f) = \int_{ \BCx} O^{A, \shskip N}_{f, \shskip \psi} (n(x) \varw_0) i_{\pi,\shskip \psi} (n(x) \varw_0) d x.
\end{align}

	\section{Bessel identities over $\BC$}
	
	We are now ready to establish the Waldspurger correspondence between   irreducible unitary representations of $ G = \GL_2 (\BC) $ with   trivial central character (that is, representations of $  G/Z = \PGL_2 (\BC) $) and  irreducible unitary representations of $S = \SL_2 (\BC)$ from the identities between their Bessel functions.
	
	Fix the nontrivial additive character $ \psi = \psi_{\lambdaup}$ of $\BC$ defined by $\psi (z) = e (\Tr (\lambdaup z) )$. Let $D \in \BCx$ and define $ \psi^D (z) = \psi (D z)$. Define the Weil factor $\gamma (z, \psi^D) $ by
	\begin{align*}
	\gamma (z, \psi^D) = 1/{\textstyle \sqrt {\|2 D\|}} .
	\end{align*}
	Note that $ \gamma (z, \psi^D)$ does not depend on $z$, but we would rather keep $z$ in this conventional notation.
	Define a {transfer factor}
	\begin{align}
	\Delta_{D,\shskip \psi} (z) = \gamma (z, \psi^D) \psi (2D/ z) {\textstyle \sqrt {\|z\|} }.
	\end{align}
	Let $\sigma$ be an irreducible unitary representation of $S$.
	When changing $\psi = \psi_{\lambdaup}$ to $\psi^D = \psi_{\lambdaup D}$, we wish to keep the Haar measure fixed on $S$ with respect to $\psi  $ and instead re-normalize the formulae of $ j_{\sigma,\shskip \psi^D } $ in \S \ref{sec: Bessel, GL2} by an extra factor $   \sqrt {\|D\|} $\footnote{In the paper \cite{BaruchMao-Real}, the more reasonable re-normalizing factor should be $\sqrt {|D|}$ not $1/\sqrt {|D|}$, so their formulae (19.2) and (19.4) need small modifications on the factors involving $|D|$.}.

	\begin{defn}
	Let $\pi$ be an irreducible unitary representation of $G / Z$. We say that an irreducible unitary representation $\sigma$ of $S$ corresponds to $\pi$ if the following equality {\rm(}Bessel identity{\rm)} holds
	\begin{align}\label{eq: Bessel indentity}
	 i_{\pi, \shskip \psi} (n (z/ 4 D)\varw_0) = \frac {4 \Delta_{D,\shskip \psi} (z) \epsilon (\pi, 1/2, \psi)   } {L(\pi, 1/2)} j_{\sigma, \shskip \psi^D} (\varw s (z)),
	\end{align}
	for all $z \in \BCx$.
	\end{defn}

The following theorem is the main theorem of this paper.

\begin{thm}\label{thm: main}
	For each irreducible   unitary representation $\pi$ of $G / Z$, there exists a corresponding irreducible   unitary representation $\sigma $ of $S$ satisfying the Bessel identity \eqref{eq: Bessel indentity}. The correspondence is given by
	\begin{align*}
	\pi_{\mu, \shskip m}  \longrightarrow \sigma_{\frac 1 2 \mu, \shskip \frac 1 2 m}, \hskip 10 pt \text{for } \mu \in i \shskip \BR, m \text{ even, or } \mu \in \big( 0, \tfrac 1 2\big) \hskip -1 pt , m = 0.
	\end{align*}
\end{thm}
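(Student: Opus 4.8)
The plan is to deduce the Bessel identity \eqref{eq: Bessel indentity} directly from two explicit formulae already established: formula \eqref{6eq: defn of i} for the relative Bessel function $i_{\pi, \shskip \psi}$, which repackages the Fourier transform identity \eqref{6eq: Fourier} (equivalently \eqref{eq: main 1}) of \cite{Qi-II-G}, and formula \eqref{4eq: j = J, SL} for the restriction to $S$ of the Bessel function $j$; since $D$ runs over $\BCx$, proving \eqref{eq: Bessel indentity} for all $D$ is exactly the claimed general comparison of $i_{\pi, \shskip \psi}$ with $j_{\sigma, \shskip \psi'}$ for arbitrary nontrivial $\psi, \psi'$. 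First I would recall that an irreducible unitary representation $\pi$ of $G/Z = \PGL_2 (\BC)$ is necessarily $\pi_{\mu, \shskip m}$ with $m$ even and with either $\mu \in i \shskip \BR$ or $m = 0$ and $\mu \in \big(0, \tfrac12\big)$; in either case $|\Re \mu| < \tfrac12$, so Corollary \ref{cor53}, equation \eqref{6eq: I = int of i}, and hence \eqref{6eq: defn of i} all apply to $\pi$. Given such $\pi = \pi_{\mu, \shskip m}$, I would set $\sigma = \sigma_{\frac12 \mu, \shskip \frac12 m}$ and check that it is again irreducible unitary: for $\mu \in i\shskip\BR$ it is a unitary principal series of $S$, and for $m = 0$, $\mu \in \big(0, \tfrac12\big)$ one has $\tfrac12 \mu \in \big(0, \tfrac14\big) \subset \big(0, \tfrac12\big)$ with $\tfrac12 m = 0$, so $\sigma$ is a complementary series. (Uniqueness of $\sigma$, although not part of the statement, is immediate since $\sigma$ is determined by $j_{\sigma, \shskip \psi^D}$.)

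Next I would simply expand both sides of \eqref{eq: Bessel indentity} at $z \in \BCx$. On the left, substituting $x = z/4D$ into \eqref{6eq: defn of i} and using $\psi(w) = e(\Tr(\lambdaup w))$ gives
\[
i_{\pi, \shskip \psi}(n(z/4D) \varw_0) = \frac{1}{L(\pi, 1/2)}\, \psi(2D/z)\, \frac{2|\lambdaup D|}{|z|}\, \bfJ_{\frac12 \mu, \shskip \frac12 m}\big((\lambdaup D/z)^2\big).
\]
On the right, using $\varw s(z) = s(z^{-1}) \varw$ together with \eqref{4eq: j = J, SL} for $\sigma_{\frac12\mu, \shskip \frac12 m}$ and the character $\psi^D = \psi_{\lambdaup D}$, and applying the re-normalization of $j_{\sigma, \shskip \psi^D}$ by the factor $\sqrt{\|D\|}$, gives
\[
j_{\sigma, \shskip \psi^D}(\varw s(z)) = \sqrt{\|D\|}\, (-1)^{\frac12 m}\, \frac{|\lambdaup D|}{|z|^2}\, \bfJ_{\frac12 \mu, \shskip \frac12 m}\big((\lambdaup D/z)^2\big).
\]
Inserting $\Delta_{D, \shskip \psi}(z) = \psi(2D/z)\,|z|\,/\sqrt{\|2D\|}$ and $\epsilon(\pi, 1/2, \psi) = i^{|m|} = (-1)^{\frac12 m}$ (which is independent of $\psi$ here because the central character of $\pi$ is trivial), and using $\sqrt{\|D\|}/\sqrt{\|2D\|} = \tfrac12$, $\big((-1)^{\frac12 m}\big)^2 = 1$, and $|z|/|z|^2 = 1/|z|$, one checks that the product $4\,\Delta_{D, \shskip \psi}(z)\,\epsilon(\pi, 1/2, \psi)\, j_{\sigma, \shskip \psi^D}(\varw s(z))$ collapses to $\psi(2D/z)\, \tfrac{2|\lambdaup D|}{|z|}\, \bfJ_{\frac12\mu, \shskip \frac12 m}\big((\lambdaup D/z)^2\big)$, that is, to $L(\pi, 1/2)\, i_{\pi, \shskip \psi}(n(z/4D)\varw_0)$. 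This proves \eqref{eq: Bessel indentity}.

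The hard part, frankly, is not in this theorem but behind it: all the analytic substance — the Fourier transform formula \eqref{6eq: Fourier} of \cite{Qi-II-G} and the regularity theorems of \S\ref{sec: distributions}--\S\ref{sec: regularity relative} — is already in hand, and the verification above is elementary. The only point that demands care is the bookkeeping of normalizing constants: the self-dual measure factor $\sqrt{\|\lambdaup\|}$, the Weil factor $\gamma(z, \psi^D) = 1/\sqrt{\|2D\|}$, the ad hoc factor $\sqrt{\|D\|}$ re-normalizing $j_{\sigma, \shskip \psi^D}$ so as to keep the Haar measure on $S$ fixed, the $\epsilon$-factor, and the signs $(-1)^{\frac12 m}$ must all cancel precisely — and it is precisely this cancellation that dictates the particular shape of the transfer factor $\Delta_{D, \shskip \psi}$. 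Separately, one then identifies the resulting map $\pi_{\mu, \shskip m} \mapsto \sigma_{\frac12\mu, \shskip \frac12 m}$ with the Waldspurger correspondence $\pi \mapsto \Theta(\pi)$ by comparing local parameters, but that identification is not needed for the Bessel identity itself.
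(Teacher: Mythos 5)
Your proposal is correct and follows essentially the same route as the paper: a direct comparison of the explicit formulae \eqref{6eq: defn of i} and \eqref{4eq: j = J, SL} (with the $\sqrt{\|D\|}$ re-normalization and transfer factor bookkeeping), together with the evaluation $\epsilon(\pi,1/2,\psi)=i^{|m|}=(-1)^{\frac12 m}$, which is exactly what the paper's proof of Theorem \ref{thm: main} does. The explicit cancellation you carry out checks out.
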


\begin{proof}
	The proof is a simple comparison between the formulae \eqref{4eq: j = J, SL} and \eqref{6eq: defn of i}. To verify the equality,  it should be noted that by \cite[(4.7)]{Knapp} and \cite[\S 3]{Tate} we have $\epsilon (\pi, s, \psi_{\lambdaup}) = i^{|m|} \omega_{\pi} (\lambdaup) \|\lambdaup\|^{2s -1}$ if $\pi = \pi_{\mu, \shskip  m}$, and hence $\epsilon (\pi, 1/2, \psi_{\lambdaup}) = i^{|m|} = (-1)^{\frac 1 2 m}$ when $m$ is even so that $\omega_{\pi} (\lambdaup) = 1$. Note that $\epsilon (\pi, 1/2, \psi)$ is actually independent on $\psi$.
\end{proof}

We remark that $ \sigma = \Theta (\pi) = \Theta (\pi , \psi^D)$ according to the notation of the theta correspondence of Waldspurger \cite{Waldspurger-Shimura}. Unlike the real case (see \cite[Theorem 19.2]{BaruchMao-Real}),  here $\sigma$    is independent on the additive character $\psi^D$. 

Finally, we would like to prove an identity in the level of distributions. 
It is proven in \cite{Jacquet-RTF} that for each $f \in C_c^\infty (G)$ there exists $f' \in C_c^\infty (S)$ such that
\begin{equation}\label{eq: O AN = O NN}
O^{A, \shskip N}_{f, \shskip \psi} (n(z/4D) \varw_0) = O^{N, \shskip N}_{f', \shskip \psi^D} (\varw s(z)) \psi (-2D/z) {\textstyle \sqrt {\|z\|}}/ \gamma (z, \psi^D),
\end{equation}
for all $z \in \BCx$. See \eqref{eq: O NN (g)} and \eqref{eq: O AN (g)} for the definitions of these orbital integrals (the Haar measure in \eqref{eq: O NN (g)} however is now chosen with respect to $\psi$ rather than $\psi^D$).   By \eqref{eq: J = int O}, \eqref{eq: I = int O} and Theorem \ref{thm: main} we have the following theorem.

\begin{thm}
Assume that $f$ and $f '$ satisfy \eqref{eq: O AN = O NN} and that $\pi$ and $\sigma$ correspond as in Theorem {\rm\ref{thm: main}}. Then
\begin{equation}
I_{\pi,\shskip \psi} (f) = J_{\sigma,\shskip \psi^D} (f') \epsilon (\pi, 1/2, \psi)    / \|2 D\| L (\pi, 1/2).
\end{equation}
\end{thm}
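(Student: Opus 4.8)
The plan is to combine the three ingredients already assembled in the preceding sections: the integral representation of $I_{\pi,\shskip\psi}$ in terms of the orbital integral $O^{A,\shskip N}_{f,\shskip\psi}$ against the relative Bessel function $i_{\pi,\shskip\psi}$ (equation \eqref{eq: I = int O}), the analogous integral representation of $J_{\sigma,\shskip\psi^D}$ in terms of $O^{N,\shskip N}_{f',\shskip\psi^D}$ against $j_{\sigma,\shskip\psi^D}$ (equation \eqref{eq: J = int O}), and the pointwise matching of orbital integrals \eqref{eq: O AN = O NN} coming from Jacquet's work, together with the pointwise Bessel identity \eqref{eq: Bessel indentity} proven in Theorem \ref{thm: main}. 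The strategy is simply to start from $I_{\pi,\shskip\psi}(f)$, substitute all four relations, and track the change of variables so that the $x$-integral over $\BCx$ on the $G$-side becomes the $a$-integral over $\BCx$ on the $S$-side.

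First I would write, using \eqref{eq: I = int O},
\begin{align*}
I_{\pi,\shskip \psi}(f) = \int_{\BCx} O^{A,\shskip N}_{f,\shskip \psi}(n(x)\varw_0)\, i_{\pi,\shskip \psi}(n(x)\varw_0)\, dx.
\end{align*}
Now substitute $x = z/4D$, so $dx$ picks up the Jacobian $\|4D\|^{-1} = 1/(16\|D\|)$ and the integral is over $z \in \BCx$. In the integrand, $i_{\pi,\shskip \psi}(n(z/4D)\varw_0)$ is replaced via the Bessel identity \eqref{eq: Bessel indentity} by $\dfrac{4\,\Delta_{D,\shskip\psi}(z)\,\epsilon(\pi,1/2,\psi)}{L(\pi,1/2)}\, j_{\sigma,\shskip\psi^D}(\varw s(z))$, while $O^{A,\shskip N}_{f,\shskip\psi}(n(z/4D)\varw_0)$ is replaced via \eqref{eq: O AN = O NN} by $O^{N,\shskip N}_{f',\shskip\psi^D}(\varw s(z))\,\psi(-2D/z)\sqrt{\|z\|}/\gamma(z,\psi^D)$. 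The key algebraic step is then to collect the scalar factors: the transfer factor $\Delta_{D,\shskip\psi}(z) = \gamma(z,\psi^D)\,\psi(2D/z)\sqrt{\|z\|}$ multiplies against $\psi(-2D/z)\sqrt{\|z\|}/\gamma(z,\psi^D)$, and the $\psi(\pm 2D/z)$ and $\gamma(z,\psi^D)^{\pm1}$ cancel in pairs, leaving $\|z\|$. Combining with the Jacobian $1/(16\|D\|)$ and recalling $\gamma(z,\psi^D) = 1/\sqrt{\|2D\|}$ so that $\|2D\| = 4\|D\|$, one finds the overall numerical constant works out to $\epsilon(\pi,1/2,\psi)/(\|2D\|\,L(\pi,1/2))$, and the residual $\|z\|$ must be absorbed by recognizing that $\varw s(z) \in X\cap S$ with the appropriate measure normalization so that $\int_{\BCx}(\cdots)\|z\|\, dx$ becomes $\int O^{N,N}_{f',\psi^D}(s(a)\varw)\,j_{\sigma,\psi^D}(s(a)\varw)\|a\|^{-2}\dx a$, i.e. exactly \eqref{eq: J = int O} giving $J_{\sigma,\shskip\psi^D}(f')$. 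Here one uses that $\varw_0 = \varw_0$ versus $\varw$ and $s(z)$ versus $s(1/z)$ are related by the Weyl-group relabeling $\varw s(z) = s(z)\-\varw_0 = \dots$, so the two parametrizations of the relevant cell agree up to the bookkeeping already fixed in \S\ref{sec: Bessel, GL2}.

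The main obstacle I expect is precisely this measure-and-parametrization bookkeeping: making sure that the Haar measure on $S$ in \eqref{eq: O NN (g)} — which the remark after \eqref{eq: O AN = O NN} explicitly says is taken with respect to $\psi$ and not $\psi^D$ — matches the measure implicit in \eqref{eq: J = int O}, and that the re-normalization of $j_{\sigma,\shskip\psi^D}$ by the extra factor $\sqrt{\|D\|}$ introduced just before the definition is consistently inserted. One must also verify that the substitution $x \mapsto z/4D$ followed by the passage $z \leftrightarrow a$ on the $S$-side respects the identification $n(x)\varw_0 \leftrightarrow \varw s(z)$ of cell coordinates, i.e. that the factor $\|z\|$ produced above is exactly what converts $dx$ on $\BCx$ into $\|a\|^{-2}\dx a$ on $\BCx$. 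Once these normalizations are pinned down — all of which are routine once \eqref{eq: J = int O}, \eqref{eq: I = int O}, \eqref{eq: O AN = O NN} and \eqref{eq: Bessel indentity} are in hand — the identity $I_{\pi,\shskip\psi}(f) = J_{\sigma,\shskip\psi^D}(f')\,\epsilon(\pi,1/2,\psi)/(\|2D\|\,L(\pi,1/2))$ follows immediately by the chain of substitutions, with no further analytic input needed.
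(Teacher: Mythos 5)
Your proposal is correct and is essentially the paper's own (largely unwritten) argument: the theorem is presented there as an immediate consequence of \eqref{eq: I = int O}, \eqref{eq: J = int O}, the matching \eqref{eq: O AN = O NN} and the Bessel identity \eqref{eq: Bessel indentity}, via exactly your substitution $x = z/4D$, the cancellation $\Delta_{D,\shskip\psi}(z)\cdot\psi(-2D/z)\sqrt{\|z\|}/\gamma(z,\psi^D)=\|z\|$, and $4/\|4D\|=1/\|2D\|$. The only bookkeeping slip in your sketch is the cell identification: one should use $\varw s(z)=s(1/z)\,\varw$ (your $s(z)\-\varw_0$ is off by a sign), after which $a=1/z$ turns $\|z\|\,dz$ into $\|a\|^{-2}\,\dx a$ and the integral becomes precisely the right-hand side of \eqref{eq: J = int O}, as you intended.
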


	\appendix
	
	\section{Orbital integrals}
	
	 In this appendix, we state some preliminary analytic results on the $(N,N)$ and $(A,N)$ orbital integrals that were needed for verifying the absolute convergence of certain integrals. While the volumn estimates are slightly more complicated (they can still be done in the polar coordinates without much difficulty), the proofs are literally identical with those for the real case in \cite[\S 4.1, 4.2]{BaruchMao-Real} and will be omitted here.
	
   \subsection{$(N,N)$ orbital integrals}
   For $f\in C_c (G)$, $a,c\in \BCx$, we define the following orbital integral
  \begin{equation}\label{def: J (a,c,f)}
   J(a,c,f)=\iint f(n(x)z(c)s(a)\varw_0n(y))dxdy.
  \end{equation}

\begin{prop}\label{prop:A1}
Let $f\in C_c (G)$. Then

{\rm(1).} $J(a,c,f)$ converges absolutely for all $a,c\in \BCx$.

{\rm(2).} $J(a,c,f)$ is compactly supported as a function of $c$ in $\BCx$, independent on $a$.

{\rm(3).} $J(a,c,f)$ is zero when $|a|$ is small, independent on $c$.

{\rm(4).} $ J(a,c,f) =O(|a|^{2+\varepsilon})$ for any $\varepsilon>0$, when $|a|$ is large, independent on $c$.
\end{prop}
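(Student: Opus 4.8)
The plan is to make the Bruhat factorization completely explicit and then read off all four assertions from the sizes of the entries of a matrix lying in $\mathrm{supp}(f)$. A direct multiplication of $g(x,y) := n(x) z(c) s(a) \varw_0 n(y)$ gives
\begin{align*}
g(x,y) = \begin{pmatrix} (c/a) x & (c/a)(a^2 + xy) \\ c/a & (c/a) y \end{pmatrix},
\end{align*}
with $\det g(x,y) = -c^2$ independent of $x$ and $y$. I would fix a compact set $K \supseteq \mathrm{supp}(f)$; let $C = C_K$ bound all four entries of any $h \in K$, and, since $\det$ is continuous and nonvanishing on $K$, let $0 < d_1 \leqslant d_2$ satisfy $d_1 \leqslant |\det h| \leqslant d_2$ on $K$. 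Because $|\det g(x,y)| = |c|^2$, the integrand $f(g(x,y))$ vanishes identically in $(x,y)$ unless $\sqrt{d_1} \leqslant |c| \leqslant \sqrt{d_2}$; this annulus is compact in $\BCx$ and independent of $a$, which is exactly part (2). In the remaining parts we may therefore assume $|c|$ bounded above and below by absolute constants.

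For part (1), with $a, c \in \BCx$ fixed the $(2,1)$-entry $c/a$ is a fixed nonzero scalar, so the $(1,1)$- and $(2,2)$-entries of $g(x,y)$ force $|x|, |y| \leqslant C|a|/|c|$ on the support of the integrand; hence $J(a,c,f)$ is the integral of the bounded continuous function $f(g(x,y))$ over a bounded region, and converges absolutely. For part (3), observe that the $(2,1)$-entry of $g(x,y)$ has modulus $|c|/|a| \geqslant \sqrt{d_1}/|a|$, whereas $|h_{21}| \leqslant C$ for every $h \in K$; so as soon as $|a| < \sqrt{d_1}/C$ the integrand vanishes for all $x$, $y$ and all $c$, whence $J(a,c,f) \equiv 0$, uniformly in $c$.

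Part (4) is the only one requiring a genuine volume estimate. For $g(x,y) \in K$ the $(1,1)$- and $(2,2)$-entries again give $|x|, |y| \leqslant C|a|/|c| \lll |a|$, while the $(1,2)$-entry gives $|a^2 + xy| \leqslant C|a|/|c| \lll |a|$. The crucial point is that for $|a|$ large this last bound forces $|xy| \geqslant |a|^2 - O(|a|) \geqslant \tfrac12 |a|^2$, and, combined with $|y| \lll |a|$, it pushes $|x|$ above a fixed positive multiple of $|a|$ as well. Consequently, for each admissible $x$ the admissible $y$ lie in a disc of radius $\lll |a|/|x| \lll 1$, of area $O(1)$, while $x$ itself ranges over a disc of radius $O(|a|)$, of area $O(|a|^2)$; therefore $J(a,c,f) \lll \|f\|_{\infty}\, |a|^2$, which is a fortiori $O(|a|^{2+\varepsilon})$, with all implied constants depending only on $K$ and not on $c$. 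I expect the main — indeed essentially the only — obstacle to be exactly this lower bound on $|x|$ extracted from the $(1,2)$-entry: without it the $y$-area estimate $\lll |a|^2/|x|^2$ fails to be integrable near $x = 0$, so it is this observation, rather than the elementary polar-coordinate bookkeeping that replaces its real-variable analogue in \cite[\S 4.1]{BaruchMao-Real}, that carries the content of part (4).
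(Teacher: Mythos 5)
Your proof is correct, and it is essentially the argument the paper has in mind: the paper omits the proof, stating it is literally identical to the real case in \cite[\S 4.1]{BaruchMao-Real}, which proceeds exactly by your method of writing out the Bruhat-cell entries of $n(x)z(c)s(a)\varw_0 n(y)$ and reading off support and volume constraints from a compact set containing $\mathrm{supp}(f)$. Your part (4) even yields the slightly sharper bound $O(|a|^2)$; note only that the lower bound $|x|\ggg |a|$ you single out is needed just to remove a logarithm, since bounding the $y$-fiber by $\min\big(|a|^2, |a|^2/|x|^2\big)$ already gives $O(|a|^2\log|a|)=O(|a|^{2+\varepsilon})$, which suffices for the proposition as stated.
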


\subsection{$(A,N)$ orbital integrals}
 For $f\in C_c (G), x\in \BC, c\in \BCx$, define orbital integral
\begin{equation}\label{def: M(x,c,f)}
 M(x,c,f)=\iint f(s(a)z(c)n(x)\varw_0n(y))\dx  a dy.
\end{equation}

\begin{prop}\label{propA2}
Let $f\in C_c (G)$. Then

{\rm(1).} $M(x,c,f)$ converges absolutely for all $c, x \in \BCx$.

{\rm(2).} $M(x,c,f) $ is compactly supported as a function of $c$ in $\BCx$, independent on $x$.

{\rm(3).} $M(x,c,f) $ is zero for large values of $|x|$, independent on $c$.

{\rm(4).} $M(x,c,f) =O(1/|x|^{\varepsilon})$ for any $\varepsilon>0$, when $|x|$ is small, independent on $c$. 
\end{prop}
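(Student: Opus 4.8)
For the final statement I would make everything explicit by writing out
\begin{align*}
g = s(a) z(c) n(x) \varw_0 n(y) = \begin{pmatrix} acx & ac(xy+1) \\ a\- c & a\- c y \end{pmatrix}, \qquad \det g = - c^2 .
\end{align*}
Fix a compact $K \supseteq \mathrm{supp}(f)$ and $M \geqslant 1$ with $1/M \leqslant |\det h| \leqslant M$ and $|h_{ij}| \leqslant M$ for all $h \in K$. Since $\det g = -c^2$ does not involve $a, x, y$, claim $(2)$ is immediate: $M(x,c,f) \neq 0$ forces $1/M \leqslant |c|^2 \leqslant M$, a restriction on $c$ alone. Multiplying the entrywise bounds $|acx| \leqslant M$ and $|a\- c| \leqslant M$ gives $|c|^2 |x| \leqslant M^2$, hence $M(x,c,f) = 0$ once $|x| > M^3$, which is $(3)$. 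From here I would work under the standing assumptions $|c| \asymp 1$ and $|x| \leqslant 1$, all implied constants depending only on $f$.

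Next I would change variables from $(a,y)$ to the pair of diagonal entries $(\alpha, \beta) = (acx,\, a\- c y)$, a smooth bijection of $\BCx \times \BC$ onto itself. Since $a = \alpha/cx$ and $y = \alpha\beta/c^2 x$, the complex Jacobian of $(\alpha,\beta) \mapsto (a,y)$ is $\alpha / c^3 x^2$, and the factor $1/\|a\|$ in $\dx a \, d y$ is exactly cancelled, giving
\begin{align*}
M(x,c,f) = \frac {1} {\|c\|^2 \|x\|} \sideset{ }{_{\BCx \times \BC}}{\iint} f \begin{pmatrix} \alpha & \dfrac {\alpha} {x} \lp \dfrac {\alpha\beta} {c^2} + 1 \rp \\[4pt] \dfrac {c^2 x} {\alpha} & \beta \end{pmatrix} d\alpha \, d\beta .
\end{align*}
For fixed $x, c \in \BCx$ the integrand vanishes unless $|\alpha|, |\beta| \leqslant M$ (look at the diagonal) and $|f| \leqslant \|f\|_{\infty}$, so the integral converges absolutely; this is $(1)$.

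For $(4)$ I would then reduce matters to the volume bound $\mathrm{vol}(R_x) \lll |x|^2$, where $R_x \subset \BCx \times \BC$ is the region in which all four displayed entries are $\leqslant M$: dividing by $\|c\|^2 \|x\| \asymp |x|^2$ then gives $M(x,c,f) = O(1)$, in particular $O(1/|x|^{\varepsilon})$. The bounds $|\alpha| \leqslant M$ and $|c^2 x/\alpha| \leqslant M$ confine $\alpha$ to $|c|^2 |x|/M \leqslant |\alpha| \leqslant M$, while the $(1,2)$-entry bound places $\beta$ in the disc $D_\alpha$ of centre $-c^2/\alpha$ and radius $r_\alpha = M|c|^2 |x| / |\alpha|^2$. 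The crucial observation is that $D_\alpha \cap \{|\beta| \leqslant M\}$ can be nonempty only if $|c|^2/|\alpha| \leqslant r_\alpha + M$, which clears to the quadratic inequality $|\alpha|^2 - (|c|^2/M)|\alpha| + |c|^2 |x| \geqslant 0$; for small $|x|$ its two roots are $\asymp |x|$ and $\asymp 1$, so $R_x$ sits over the two thin annuli $|\alpha| \asymp |x|$ and $|\alpha| \asymp 1$. On the first annulus (area $\lll |x|^2$) the $\beta$-slice has area $\lll \min(r_\alpha^2, M^2) \lll 1$; on the second annulus (area $\lll 1$) the $\beta$-slice is contained in $D_\alpha$, of area $\lll r_\alpha^2 \lll |x|^2$. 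In either case the contribution to $\mathrm{vol}(R_x)$ is $\lll |x|^2$, giving the claim and hence $(4)$.

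The step I expect to be the real obstacle is precisely this volume estimate. The lazy bound, using only $|\alpha|, |\beta| \leqslant M$ together with $|\alpha| \gtrsim |x|$, yields merely $\mathrm{vol}(R_x) \lll |x|$ and hence the much weaker $M(x,c,f) = O(1/|x|)$. What rescues the exponent is the "forbidden middle" $|x| \ll |\alpha| \ll 1$: there, smallness of the $(1,2)$-entry forces $\alpha\beta/c^2 \approx -1$ and hence $|\beta| \gg M$, contradicting the bound on the $(2,2)$-entry, so the admissible $\alpha$-set is a union of two thin annuli rather than one full annulus. The mechanism is identical to \cite[\S 4.2]{BaruchMao-Real}; the only extra chore compared with the real case is that the planar area computations must be carried out in polar coordinates on $\BC$.
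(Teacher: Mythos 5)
Your argument is correct and is essentially the proof the paper has in mind: the paper omits the details and refers to \cite[\S 4.1, 4.2]{BaruchMao-Real}, noting only that the volume estimates are to be redone in polar coordinates over $\BC$, which is exactly what you carry out (determinant constraint for (2), entrywise product bound for (3), the change of variables $(\alpha,\beta)=(acx,a\-cy)$ cancelling $1/\|a\|$ for (1), and the two-annuli exclusion of the middle range of $|\alpha|$ for (4)). Your computation in fact yields the stronger bound $M(x,c,f)=O(1)$ for small $|x|$, which of course implies the stated $O(1/|x|^{\varepsilon})$; the only slip is the side remark that the ``lazy bound'' uses only $|\alpha|,|\beta|\leqslant M$ and $|\alpha|\gtrsim|x|$ (one also needs the disc constraint on $\beta$ to get $\mathrm{vol}(R_x)\lll|x|$), but nothing in the proof depends on it.
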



\def\cprime{$'$} \def\cprime{$'$}

\end{document}